\DeclareMathOperator{\diag}{diag}
\title{Higher order derivatives of matrix functions%
  \thanks{\textbf{Funding:} This work was supported by the Swedish strategic research programme eSSENCE.}}
\author{Emanuel H. Rubensson\thanks{Division of Scientific Computing, Department of Information Technology, Uppsala University, Box 337, SE-751 05 Uppsala, Sweden (\email{emanuel.rubensson@it.uu.se})}}
\begin{document}
\maketitle
\begin{abstract}
  We present theory for general partial derivatives of matrix
  functions on the form $f(A(x))$ where $A(x)$ is a matrix path of
  several variables ($x=(x_1,\dots,x_j)$). Building on results by
  Mathias [SIAM J. Matrix Anal. Appl., 17 (1996), pp. 610-620] for the first order derivative, we develop a block upper
  triangular form for higher order partial derivatives.
  This block
  form is used to derive conditions for existence and a generalized
  Dalecki{\u\i}-Kre{\u\i}n formula for higher order derivatives. We show that
  certain specializations of this formula lead to classical formulas
  of quantum perturbation theory.
  We show how our results are related to earlier results for higher
  order Fr\'echet derivatives.
  Block forms of complex step approximations are introduced and we show how
  those are related to evaluation of derivatives through the upper
  triangular form. These relations are illustrated with numerical
  examples.

\end{abstract}
\section{Introduction}
In this article, we are concerned with derivatives of matrix functions
on the form
\begin{equation}\label{eq:higher_deriv}
  \left.\frac{\partial^k}{\partial x_{d_1}\dots\partial x_{d_k}} f(A(x_1,\dots,x_j))\right|_{x_1=\dots=x_j=0}
\end{equation}
where $A(x)$ is a complex matrix in general nonlinear in $x =
(x_1,x_2,\dots,x_j)$ and $d_i \in \{1,\dots,j\},\ i=1,\dots,k$.  Here,
$f: \mathbb{C}^{n\times n} \rightarrow \mathbb{C}^{n\times n}$ is a
matrix function, generalized from a scalar function in the standard
sense~\cite{Higham_functions_of_matrices_2008}.
Several types of higher order derivatives have been considered
previously, and we will below discuss how they are related to each
other and to~\eqref{eq:higher_deriv}.
Besides intrinsic interest, our main motivation
to consider derivatives on the form in \eqref{eq:higher_deriv}
is that it includes quantum perturbation theory and
response calculations used to characterize material and molecular
properties.

The derivative \eqref{eq:higher_deriv} considered here can be seen as
a generalization of the G\^{a}teaux derivative
\begin{equation}
  G_f(A,E) = \frac{d}{dx}f(A+xE)|_{x=0}.
\end{equation}
In the first order case ($j=k=1$), the derivative \eqref{eq:higher_deriv} is
equal to
$G_f(A(0),A'(0))$. The Fr\'echet
derivative is the unique mapping $L_f(A,E)$, linear in $E$, that
satisfies
\begin{equation}
  L_f(A,E) = f(A+E) - f(A) + o(\|E\|).
\end{equation}
If the Fr\'echet derivative exists it is equal to the G\^{a}teaux
derivative~\cite{Higham_functions_of_matrices_2008} and then, in the first
order case, \eqref{eq:higher_deriv} is also equal to $L_f(A(0),A'(0))$.
A motivation to study the Fr\'echet derivative and methods for its
evaluation has been the use of the Fr\'echet derivative in matrix
function condition
estimation~\cite{condition_estimate_kenney_laub_1989,
  condition_estimate_mathias_1995}.  A key result is that
\begin{equation}\label{eq:mathias_intro}
  f\left(\begin{bmatrix}A & E \\ 0 & A\end{bmatrix}\right) =
  \begin{bmatrix}f(A) & L_f(A,E) \\ 0 & f(A)\end{bmatrix}
\end{equation}
where it is assumed that $f$ is $2m-1$ times continuously
differentiable on an open subset of $\mathbb{C}$ or $\mathbb{R}$
containing the eigenvalues of $A+xE$ and that the largest Jordan
block of $A+xE$ is at most $m$ for all $x$ in some neighborhood of
0~\cite{Mathias_derivative_1996,
  NajfeldHavel_derivative_1995}. Another important result, due to
Dalecki{\u\i} and Kre{\u\i}n~\cite{DaleckiiKrein_1965}, is that for Hermitian $A$ and $E$
and an eigendecomposition $A = V\Lambda V^*,
\Lambda=\textrm{diag}(\lambda_i)$, we have that
\begin{equation}\label{eq:daleckii_krein_intro}
  L_f(A,E) = V(G\circ (V^*EV))V^*
\end{equation}
where $G$ is the Loewner matrix with the divided differences
$f[\lambda_i,\lambda_j]$ as elements and $A\circ B$ is the Hadamard
product.
This result was extended to generalized matrix functions, defined in
terms of singular value decomposition, by
Noferini~\cite{noferini_2017}.
Krylov subspace methods to compute the action of the Fr\'echet
derivative on a vector were proposed
in~\cite{kandolf_krylov_frechet_2021, kandolf_krylov_frechet_2017}.
There are a number of studies that address Fr\'echet derivatives
of particular matrix functions, including the matrix
exponential~\cite{mathias_exponential_1992,
  NajfeldHavel_derivative_1995, almohy_exp_frechet}, the matrix
cosine~\cite{al_mohy_cosine_2022}, and the matrix $p$th
root~\cite{cardoso_pthroot_2011}.

To also compute the sensitivity of the condition
estimates---the condition number of the condition number---has been a
motivation for work on higher order Fr\'echet
derivatives~\cite{HighamRelton_Frechet_2014}.
The Fr\'echet derivative of order $j>1$ can be defined as the
$j$-linear mapping $L_f^{(j)}(A_0,E_1,\dots,E_j)$, linear in each of
$E_1,\dots,E_j$, that satisfies
\begin{multline}\label{eq:frechet_higher_def}
  L_f^{(j)}(A_0,E_1,\dots,E_j) =    
  L_f^{(j-1)}(A_0+E_j,E_1,\dots,E_{j-1}) \\ -L_f^{(j-1)}(A_0,E_1,\dots,E_{j-1})
  +o(\|E_j\|),
\end{multline}
where $L_f^{(1)}(A,E) = L_f(A,E)$.
We may use the shorthand notation $L_f^{(j)}(A,E)$ when $E_i = E,\, i
= 1,\dots, j$.
If the higher order Fr\'echet derivative is continuous in $A_0$, then
\begin{align}\label{eq:higher_linear_partial}
  L_f^{(j)}(A_0,E_1,\dots,E_j) = &
  \left.\frac{\partial^j}{\partial x_{1}\dots\partial x_{j}} f\left(A_0+\sum_{i=1}^jx_iE_i\right)\right|_{x_1=\dots=x_j=0}
\end{align}
which is a special case of \eqref{eq:higher_deriv} with $A(x) =
A_0+\sum_{i=1}^jx_iE_i$, see \cite{HighamRelton_Frechet_2014,Nashed_differentials_1966}.
An integral representation of higher order Fr\'echet derivatives was
proposed in \cite{Schweitzer_2023}.  Complex step approximations of
first and higher order Fr\'echet derivatives were proposed
in~\cite{complex_step_higher_frechet_2021,
  Al-MohyHigham_derivative_2009, werner_arxiv_2022}.
Higham and Relton~\cite{HighamRelton_Frechet_2014} generalized~\eqref{eq:mathias_intro} to higher order
Fr\'echet derivatives. Mathias~\cite{Mathias_derivative_1996} generalized~\eqref{eq:mathias_intro} to
higher derivatives on the form 
\begin{equation}\label{eq:higher_nonlinear_ordinary}
  \left.\frac{d^j}{dx^j}f(A(x))\right|_{x=0},
\end{equation}
where $A(x)$ is a matrix depending on a single variable
$x$.
A similar development was carried out by Najfeld and
Havel~\cite{NajfeldHavel_derivative_1995} for the higher derivative
\begin{equation}\label{eq:higher_linear_ordinary}
  \left.\frac{d^j}{dx^j}f(A_0 +xE_1)\right|_{x=0}.
\end{equation}
This type of derivative was also used in error estimates for the
complex step approximation by Al-Mohy and
Higham~\cite{Al-MohyHigham_derivative_2009}.
An explicit expression to
evaluate~\eqref{eq:higher_nonlinear_ordinary} for $j=2$, involving
Hadamard products and divided differences similar
to~\eqref{eq:daleckii_krein_intro}, was presented
in~\cite{HornJohnson_topics_1991}.
We note
that~\eqref{eq:higher_linear_ordinary} is a special case both of
\eqref{eq:higher_nonlinear_ordinary} with $A(x) = A_0 +xE_1$ and
of~\eqref{eq:higher_linear_partial} with $E_1 = \dots = E_j$,
see~\cite{Nashed_differentials_1966, Zorn_analytic_functions_1945}.
All of the above formulations are special
cases of \eqref{eq:higher_deriv}, which includes both the possibility
of partial derivatives and nonlinearities in $A(x)$.
We note that nonlinearities in $A(x)$ are always present in
self-consistent quantum response calculations.  Even if an external
perturbation is linear, it will induce a nonlinear perturbation
through the self-consistent field equations, see for
example~\cite{weber_2005}.

The purpose of this work is to develop theory for higher order
derivatives of matrix functions, that can be useful for method
development and to understand existing methods and the relationship
between them.
In particular we develop a block upper triangular formulation for
higher order derivatives, building on the result
in~\eqref{eq:mathias_intro} for the first order derivative. We show
how this formulation is related to results for higher order Fr\'echet
derivatives. By a characterization of the Jordan structure of the
block upper triangular form, we derive conditions for the existence of
higher order derivatives. Those conditions are specialized to higher
order Fr\'echet derivatives, giving a weaker requirement on the
regularity of $f$ than in previous
work~\cite{HighamRelton_Frechet_2014}.
We derive also a generalized Dalecki{\u\i}-Kre{\u\i}n formula and show that
certain specializations thereof result in classical formulas 
of quantum perturbation theory.
We show how the block upper triangular matrix formulation for higher
derivatives is related to complex step approximations~\cite{Al-MohyHigham_derivative_2009, complex_step_higher_frechet_2021,  werner_arxiv_2022}. 

\section{Existence of higher order derivatives}\label{sec:existence}
Let $\Omega$ be an open subset of $\mathbb{C}$ and let $M_n(\Omega,m)$
denote the set of $n\times n$ complex matrices that have spectrum in
$\Omega$ and largest Jordan block of size not exceeding $m$.
The existence of higher order derivatives has previously been
considered in several works. In particular, Higham and Relton~\cite{HighamRelton_Frechet_2014} showed
that a sufficient condition for the existence of the Fr\'echet
derivative of order $j$, $L_f^{(j)}(A)$ with $A\in M_n(\Omega,m)$, is
that $f$ is $2^jm-1$ times continuously differentiable on $\Omega$,
but also conjectured that it may be possible to relax this
condition.  Mathias~\cite{Mathias_derivative_1996} showed that for
the existence of $\frac{d^j}{dt^j}f(A(t))|_{t=t_0}$,
with $A(t)$ $j$ times differentiable at $t_0$ and $A(t)\in M_n(\Omega,m)$
for all $t$ in some neighborhood of $t_0$,
it is sufficient that $f$
is $(j+1)m-1$ times continuously differentiable on
$\Omega$.
We will show that such a result holds also for the higher order
Fr\'echet derivative and the general case of~\eqref{eq:higher_deriv}.

\begin{theorem}\label{thm:exist_higher_deriv}
  Let $A(x)$ be $k$ times differentiable at $\bar{x}=(\bar{x}_1,\dots,\bar{x}_j)$ and
  assume that $A(x) \in M_n(\Omega,m)$ for all $x$ in some neighborhood of
  $\bar{x}$. Let
  \begin{equation} \label{eq:X_sequence}
    \begin{aligned}
      X_0(x) & = A(x),  \\
      X_i(x) & = 
      \begin{bmatrix}
        X_{i-1}(x) & \frac{\partial}{\partial x_{d_i}} X_{i-1}(x) \\
        0 & X_{i-1}(x)
      \end{bmatrix},\quad d_i \in \{1,\dots , j\}, \quad i = 1,\dots,k. 
    \end{aligned}
  \end{equation}
  Let $f$ be $(k+1)m-1$ times continuously differentiable on $\Omega$.
  Then, $f(A(x))$ is $k$ times differentiable at $\bar{x}$ and 
  $F_k(\bar{x}) = f(X_k(\bar{x}))$, where
  \begin{equation}
    \begin{aligned}
      F_0(x) & = f(A(x)), \\
      F_i(x) & = 
      \begin{bmatrix}
        F_{i-1}(x) & \frac{\partial}{\partial x_{d_i}} F_{i-1}(x) \\
        0 & F_{i-1}(x)
      \end{bmatrix}, \quad i = 1,\dots,k.
    \end{aligned}
  \end{equation}
\end{theorem}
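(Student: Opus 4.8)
The plan is to avoid the naive induction that peels off one derivative at a time: applying the first-order identity~\eqref{eq:mathias_intro} repeatedly would pass from a Jordan bound $m$ to $2m$ to $4m$ and so on, forcing the regularity requirement $2^km-1$ that already appears in~\cite{HighamRelton_Frechet_2014}. To reach the sharp bound $(k+1)m-1$ I would instead combine an exact identity for polynomials, a one-shot analysis of the Jordan structure of $X_k(\bar x)$, and a Hermite-interpolation argument that transfers the result from polynomials to $f$.

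First I would establish the identity for polynomials. For a single monomial and the block $\begin{bmatrix} Y & \partial_d Y \\ 0 & Y\end{bmatrix}$ the Leibniz rule $\partial_d(Y^n)=\sum_{i+j=n-1}Y^i(\partial_d Y)Y^j$ gives $p\left(\begin{bmatrix} Y & \partial_d Y \\ 0 & Y\end{bmatrix}\right)=\begin{bmatrix} p(Y) & \partial_d p(Y) \\ 0 & p(Y)\end{bmatrix}$ for every polynomial $p$, and iterating over $i=1,\dots,k$ yields $p(X_k(x))=F_k^{[p]}(x)$ as an exact identity at $\bar x$ (and wherever the relevant derivatives of $A$ exist), where $F_k^{[p]}$ denotes the tower built from $p(A(x))$. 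Since $p$ is a polynomial and $A$ is $k$ times differentiable at $\bar x$, the blocks of $F_k^{[p]}(\bar x)$ are well defined.

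Next I would bound the Jordan structure in one step. Let $\mu$ be the minimal polynomial of $A(\bar x)$, with each factor $(z-\lambda)$ raised to a power $m_\lambda\le m$. Applying the polynomial identity with $p=\mu^{k+1}$, every block of $F_k^{[\mu^{k+1}]}(\bar x)$ is a partial derivative $\partial_S[\mu(A(x))^{k+1}]|_{\bar x}$ with $|S|\le k$; expanding by the Leibniz rule produces products of $k+1$ factors among which, as only $|S|\le k$ derivatives are distributed, at least one carries no derivative and hence equals $\mu(A(\bar x))=0$. Thus $\mu(X_k(\bar x))^{k+1}=0$, so the minimal polynomial of $X_k(\bar x)$ divides $\mu^{k+1}$ and the largest Jordan block of $X_k(\bar x)$ is at most $(k+1)m$; its spectrum equals that of $A(\bar x)$ and lies in $\Omega$. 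This is precisely the structure for which $f(X_k(\bar x))$ is defined when $f\in C^{(k+1)m-1}$.

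Finally I would transfer to $f$. Choosing $p$ to be the Hermite interpolant matching $f,f',\dots,f^{((k+1)m-1)}$ at the eigenvalues of $A(\bar x)$, the Jordan bound gives $f(X_k(\bar x))=p(X_k(\bar x))=F_k^{[p]}(\bar x)$, whose blocks are the iterated partials of $p(A(x))$ at $\bar x$. It remains to show that $f(A(x))$ is itself $k$ times differentiable at $\bar x$ with iterated partials equal to those of $p(A(x))$; equivalently that $g:=f-p$, which vanishes together with its first $(k+1)m-1$ derivatives at each eigenvalue of $A(\bar x)$, yields $g(A(x))$ with all iterated partials up to order $k$ vanishing at $\bar x$. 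I expect this last step to be the main obstacle. A clean size estimate is available: representing $g(A(x))$ through divided differences of $g$ at the eigenvalues of $A(x)$, using the Hermite--Genocchi formula together with the eigenvalue splitting $|\mu(x)-\lambda|=O(|x-\bar x|^{1/m})$ coming from a Jordan block of size $m$, shows $g(A(x))=o(|x-\bar x|^k)$. The difficulty is that a size bound alone does not produce the individual iterated partials, and the eigenvalues of $A(x)$ need not be differentiable in $x$. I would therefore work with iterated difference quotients rather than derivatives, relating the $k$-th difference quotient of $g(A(x))$ to $g$ evaluated on a discrete analogue of $X_k$ whose off-diagonal blocks are finite differences of $A$, bounding it by divided differences of $g$ of order up to $(k+1)m$, and passing to the limit; this simultaneously establishes existence of the derivatives and their agreement with the blocks of $f(X_k(\bar x))$, giving $F_k(\bar x)=f(X_k(\bar x))$.
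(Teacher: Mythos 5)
Your first three steps are sound, and the second one is genuinely different from the paper: instead of the graph-theoretic route (the Friedland--Hershkowitz result of Lemma~\ref{lemma:hers} together with the longest-path count of Lemmas~\ref{lemma:longest_path} and~\ref{lemma:index_of_Xk_new}), you get the index bound via the exact polynomial identity $p(X_k(x))=F_k^{[p]}(x)$ applied to $p=\mu^{k+1}$: since at most $k$ derivatives are distributed over $k+1$ factors, every Leibniz term contains an underived factor $\mu(A(\bar{x}))=0$, so $\mu(X_k(\bar{x}))^{k+1}=0$ and the largest Jordan block of $X_k(\bar{x})$ is at most $(k+1)m$. This is a clean, self-contained substitute for the paper's combinatorial argument, and together with the exact similarity by $S=\begin{bmatrix} I & \epsilon^{-1}I \\ 0 & I \end{bmatrix}$ (which shows the difference-quotient matrices are similar to block diagonal ones) it reproduces everything Lemma~\ref{lemma:index_of_Xk_new} delivers. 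The Hermite-interpolation reduction to $g=f-p$ is also correct as far as it goes.

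The gap is in your final step, and you have correctly located but not resolved it. Two things fail as sketched. First, convergence of joint iterated difference quotients of $g(A(x))$ does not establish existence of the mixed partial derivatives: a $k$-th difference quotient can converge while the $k$-th derivative fails to exist (already for $k=2$ in one variable with symmetric differences of an odd nonsmooth function), so "passing to the limit \ldots simultaneously establishes existence" is unjustified. Existence requires nested one-variable limits, i.e.\ an induction over the levels of the tower in which at stage $i$ one forms a genuine first-order difference quotient of the already-constructed $F_{i-1}$ --- which is exactly the structure of the paper's proof, where only the outermost level is discretized at a time. Second, the quantitative bound you propose --- controlling the limit "by divided differences of $g$ of order up to $(k+1)m$" --- does not hold up for non-normal $A$: the natural expansions (Newton--Hermite form on the spectrum of the discrete tower, or Schur--Descloux path sums as in \eqref{eq:descloux}) involve divided differences of order up to roughly the tower dimension, already exceeding $(k+1)m-1$ at level $k\geq 2$, and the flatness of $g$ at the eigenvalues of $A(\bar{x})$ controls only orders up to $(k+1)m-1$; higher-order divided differences with coalescing nodes can blow up, and the individual Descloux terms need not even be defined under $C^{(k+1)m-1}$ regularity. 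What is actually needed at each limit is continuity of $X\mapsto f(X)$ on $M_{2^in}(\Omega,(i+1)m)$, which is precisely Lemma~1.1 of Mathias~\cite{Mathias_derivative_1996}, invoked by the paper at \eqref{eq:lim_switch}. If you cite that lemma your architecture closes --- but then the $g=f-p$ detour becomes unnecessary, since the nested-limit induction can be run directly on $f$, which is what the paper does.
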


Theorem~\ref{thm:exist_higher_deriv} implies the perhaps expected
result that for $A(x)$ and $B(x)$ with identical partial derivatives
at $\bar{x}$ up to some order $k$, the corresponding $k$ partial
derivatives of $f(A(x))$ and $f(B(x))$ at $\bar{x}$ are identical. In
particular, $A(x)$ may be replaced with a power series expansion
around $\bar{x}$ up to some desired order.  For example with
$x=(x_1,x_2)$ and
\begin{equation}
  B(x) = A(0) + x_1A_{x_1}(0) + x_2A_{x_2}(0) + x_1x_2A_{x_1x_2}(0)  
\end{equation}
we have that $\frac{\partial^2}{\partial x_1 \partial
  x_2}f(A(x))|_{x=0} = \frac{\partial^2}{\partial x_1 \partial
  x_2}f(B(x))|_{x=0}$.  This type of truncated power series
representation of $A(x)$ is typically used in quantum response
calculations, see for example~\cite{weber_2005}.

A crucial part of the proof of Theorem~\ref{thm:exist_higher_deriv} is
the existence of $f(X_k(\bar{x}))$.
To this end, we need a characterization of the Jordan structure of
$X_k(\bar{x})$.
We will use the result below by Friedland and Hershkowitz~\cite{Friedland_1988}. In the
following, $p(G)$ denotes the number of vertices along the longest
path in the graph represented by the adjacency matrix $G$.

\begin{lemma}[Friedland and Hershkowitz~\cite{Friedland_1988}]\label{lemma:hers}
  Let $A$ be an upper block triangular complex valued matrix.
  Associate with $A$ its reduced directed acyclic graph represented by
  the adjacency matrix $G$ such that
  \begin{equation}
    G_{ij} = \begin{cases}
      1 & \textrm{if the submatrix } A_{ij} \neq 0 \textrm{ and } i\neq j, \\
      0 & \textrm{otherwise.}
  \end{cases}
  \end{equation}
 Then, the index of $A$ does not exceed the maximal
  sum of the indices of $A_{ii},\dots,A_{jj}$ along all possible paths
  $(i,\dots , j)$ in the graph of $G$.
\end{lemma}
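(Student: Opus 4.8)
The plan is to establish the bound one eigenvalue at a time. The index of $A$ is $\max_\lambda \operatorname{ind}_\lambda(A)$, the size of the largest Jordan block of $A$ over all eigenvalues $\lambda$, where $\operatorname{ind}_\lambda$ denotes the largest Jordan block associated with a fixed $\lambda$ (and $\operatorname{ind}_\lambda(B) = 0$ when $\lambda \notin \sigma(B)$). For each fixed path $(i,\dots,j)$ in $G$ and each $\lambda$ one has $\sum_\ell \operatorname{ind}_\lambda(A_{\ell\ell}) \le \sum_\ell \operatorname{ind}(A_{\ell\ell})$, so it suffices to fix $\lambda$ and prove $\operatorname{ind}_\lambda(A) \le P_\lambda$, where $P_\lambda = \max_{(i,\dots,j)} \sum_{\ell \in (i,\dots,j)} \operatorname{ind}_\lambda(A_{\ell\ell})$ and the maximum runs over paths in $G$. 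Taking the maximum over $\lambda$ and using $\operatorname{ind}_\lambda \le \operatorname{ind}$ blockwise then recovers the stated inequality.

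To control $\operatorname{ind}_\lambda(A)$ I would pass to powers. With $N = A - \lambda I$, the index $\operatorname{ind}_\lambda(A)$ is the smallest $p$ with $\ker N^p = \ker N^{p+1}$. The matrix $N$ is block upper triangular with the same off-diagonal blocks as $A$ and diagonal blocks $N_{\ell\ell} = A_{\ell\ell} - \lambda I$, so $N_{ab} \neq 0$ with $a < b$ exactly when $G_{ab} = 1$. Expanding $N^s$ by block multiplication,
\begin{equation*}
  (N^s)_{ij} = \sum_{i = i_0 \le i_1 \le \dots \le i_s = j} N_{i_0 i_1} N_{i_1 i_2} \cdots N_{i_{s-1} i_s},
\end{equation*}
every strict increase $i_r < i_{r+1}$ traverses an edge of $G$ while every run of equal indices at a vertex $\ell$ contributes a power of $N_{\ell\ell}$. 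I would then split each $N_{\ell\ell}$ along the generalized $\lambda$-eigenspace of $A_{\ell\ell}$, on which it is nilpotent of index $\operatorname{ind}_\lambda(A_{\ell\ell})$, and the complementary invertible part.

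The crux is that the nilpotent ``budget'' of any surviving term is capped by a path sum: along the distinct vertices $\ell_0 < \dots < \ell_r$ visited by a term, the accumulated power of each $N_{\ell_a\ell_a}$ can contribute on the generalized eigenspace only while it stays below $\operatorname{ind}_\lambda(A_{\ell_a\ell_a})$, so a nonzero nilpotent contribution forces the relevant portion of $s$ to be at most $\sum_a \operatorname{ind}_\lambda(A_{\ell_a\ell_a}) \le P_\lambda$. Hence $\ker N^s$ stabilizes by $s = P_\lambda$ and $\operatorname{ind}_\lambda(A) \le P_\lambda$. The main obstacle is exactly this decoupling: the generalized-eigenspace splittings of the individual diagonal blocks are not simultaneously preserved by the off-diagonal coupling, so one cannot merely block-diagonalize and count. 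I would handle this either by bookkeeping in the path sum---showing that excursions through the invertible parts $M_{\ell\ell}$ cannot prolong a Jordan chain for $\lambda$---or, more cleanly, by induction on the number of diagonal blocks, peeling off the first block row and applying the inductive bound to the trailing principal submatrix while tracking how paths through the first vertex extend into the remaining graph. This is the content of the cited theorem of Friedland and Hershkowitz, whose argument I would follow for the remaining technical details.
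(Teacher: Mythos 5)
The paper offers no proof of this lemma: it is quoted verbatim from Friedland and Hershkowitz~\cite{Friedland_1988} and used as an external ingredient, so the benchmark for your attempt is the cited argument, not anything internal to the paper. Your skeleton is the right one and in fact reconstructs the sharper, per-eigenvalue form of the original theorem: reducing to a fixed $\lambda$ via $\operatorname{ind}(A)=\max_\lambda \operatorname{ind}_\lambda(A)$, passing to $N=A-\lambda I$, and expanding $(N^s)_{ij}$ over weakly increasing block-index sequences whose strict increases must be edges of $G$ are all correct and are the natural first moves.

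The genuine gap is the ``nilpotent budget'' step, and it is exactly the step you flag and then defer. As stated, the counting argument proves nothing about $\operatorname{ind}_\lambda(A)$: whenever some diagonal block $A_{\ell\ell}$ has spectrum not contained in $\{\lambda\}$, the factor $N_{\ell\ell}^{k}$ is nonzero for \emph{every} $k$, so $(N^s)_{ij}$ has surviving terms for all $s$, and stabilization of $\ker N^s$ cannot be read off from which terms vanish --- a kernel is not controlled by the nonvanishing of individual summands. One way to close this is a preliminary deflation: conjugate each diagonal block to $\begin{bmatrix} N_\ell & 0 \\ 0 & M_\ell \end{bmatrix}$ with $N_\ell$ nilpotent (the shifted $\lambda$-part) and $M_\ell$ invertible, then annihilate every coupling block between a nilpotent and an invertible diagonal position by solving Sylvester equations $N_\ell X - X M_{\ell'} = C$, which are uniquely solvable because the spectra are disjoint. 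The fill-in produced by these similarities creates new edges only inside the transitive closure of $G$, and this is harmless precisely because the matrix is block upper triangular: concatenations of paths are again paths, so the maximal path sum of indices is unchanged. After deflation, $\operatorname{ind}_\lambda(A)$ equals the nilpotency index of a block triangular matrix with genuinely nilpotent diagonal blocks, and there your counting does finish the job: a nonzero product visiting distinct vertices $\ell_0 < \dots < \ell_r$ has total degree $s \le \sum_{a}(\operatorname{ind}_\lambda(A_{\ell_a\ell_a})-1) + r = \sum_a \operatorname{ind}_\lambda(A_{\ell_a\ell_a}) - 1 \le P_\lambda - 1$, whence $\operatorname{ind}_\lambda(A)\le P_\lambda$. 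Since your write-up names the obstacle and two candidate routes (bookkeeping or induction on blocks) but executes neither, explicitly falling back on ``whose argument I would follow,'' it is an outline rather than a proof of the lemma's actual content --- though in fairness the paper itself also leaves that content entirely to the citation.
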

We note that several variants of this result exist in the
literature~\cite{meyer_index_1977, HornJohnson_matrix_analysis_2013,
  Mathias_derivative_1996} that all imply only the weaker result that the
index of $A$ is less than or equal to the sum of the indices of the
diagonal blocks of $A$.
Such a result is sufficient to obtain the desired result when $A(t)$
is a function of one variable since then, an upper triangular block
matrix with $(j+1) \times (j+1)$ blocks may be used for the
characterization of a $j$th order derivative,
see~\cite{Mathias_derivative_1996}.
With further assumptions on $A$ even stronger
results may be possible, see for example~\cite{bru_index_1995,
  johnson_index_1990}, but this possibility will not be further
considered here.

\begin{lemma}\label{lemma:longest_path}
  Consider a directed acyclic graph with strictly upper triangular
  adjacency matrix $G = \begin{bmatrix} A & I+C \\ 0 &
    B \end{bmatrix}$ where $A$ and $B$ represent subgraphs of equal
  order. Assume that $C$ is strictly upper triangular.  Consider also
  the reduced graph associated with $R = \begin{bmatrix} A & I \\ 0 &
    B \end{bmatrix}$. The following holds:
  \begin{itemize}
  \item[(i)] If $B=C$, then $p(G) = p(R)$. 
  \item[(ii)] If $A=B$, then $p(R) = p(A)+1$.
  \item[(iii)] If $A=B=C$, then $p(G) = p(A)+1$.
  \end{itemize}
\end{lemma}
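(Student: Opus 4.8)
The plan is to work entirely in graph-theoretic terms. I would first fix the vertex labeling: the $2n$ vertices of $G$ (and of $R$) split into a ``top'' block of $n$ vertices carrying the edges of $A$ and a ``bottom'' block of $n$ vertices carrying the edges of $B$, with a crossing edge from top vertex $u$ to bottom vertex $v$ precisely when $(I+C)_{uv}=1$ (for $G$), respectively when $u=v$ (for $R$, since there the crossing block is $I$). Because the lower-left block is zero, no edge runs from the bottom block back to the top block, so every directed path crosses at most once. This lets me record, for a DAG with adjacency matrix $A$, the quantities $\ell^{\mathrm{in}}_A(u)$ and $\ell^{\mathrm{out}}_A(u)$ (number of vertices in the longest path ending at, respectively starting at, $u$) and write
\[
  p(G)=\max\Bigl(p(A),\,p(B),\,\max_{(u,v):(I+C)_{uv}=1}\bigl(\ell^{\mathrm{in}}_A(u)+\ell^{\mathrm{out}}_B(v)\bigr)\Bigr),
\]
with the analogous expression for $R$ in which the inner maximum runs only over the diagonal $u=v$. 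Throughout I would use that in a directed acyclic graph every walk is automatically a simple path, which is exactly what legitimizes the concatenation and rerouting steps below.

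For part (i) I would prove the two inequalities separately. The bound $p(R)\le p(G)$ is immediate, since the edges of $R$ (diagonal crossings $I\subseteq I+C$, same diagonal blocks) form a subset of the edges of $G$. For $p(G)\le p(R)$ I would take a longest path $P$ in $G$. If $P$ stays within one block, or crosses on the diagonal, it already lives in $R$. The only remaining case is a crossing edge from top-$u$ to bottom-$v$ with $u\ne v$; here $C=B$ forces $B_{uv}=1$, so the bottom block contains the edge $u\to v$. I would then reroute $P$ to cross diagonally at $u$ and insert bottom-$u$ just before bottom-$v$, producing a path with one additional vertex; acyclicity guarantees bottom-$u$ is not already present, so this contradicts the maximality of $P$. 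Hence a longest path never uses an off-diagonal crossing, and therefore lies in $R$, giving equality.

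For part (ii), with $A=B$ the crossing term collapses to $\max_u\bigl(\ell^{\mathrm{in}}_A(u)+\ell^{\mathrm{out}}_A(u)\bigr)$, so it suffices to show this equals $p(A)+1$. The upper bound comes from concatenating a longest in-path and a longest out-path at $u$: acyclicity forces them to share only $u$, so the concatenation is a simple path on $\ell^{\mathrm{in}}_A(u)+\ell^{\mathrm{out}}_A(u)-1\le p(A)$ vertices. The matching lower bound follows by taking $u$ to be the first vertex of a longest path in $A$, for which $\ell^{\mathrm{out}}_A(u)\ge p(A)$ and $\ell^{\mathrm{in}}_A(u)\ge 1$. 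Since $p(A)+1$ exceeds $p(A)=p(B)$, it dominates the maximum and $p(R)=p(A)+1$. Part (iii) then requires no further work: $A=B=C$ satisfies the hypotheses of both (i) and (ii), so $p(G)=p(R)=p(A)+1$.

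I expect the rerouting step in part (i) to be the main obstacle, since it is the one place where the argument is not monotone: an off-diagonal crossing must be shown to be strictly improvable, and the bookkeeping (that the inserted bottom vertex is new, and that no edge from the bottom back to the top is ever needed) relies essentially on the acyclic, strictly-upper-triangular structure. The concatenation identity in (ii) is the other delicate point, but there the only subtlety is again the acyclicity argument that the two half-paths meet only at the pivot vertex.
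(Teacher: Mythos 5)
Your proof is correct and takes essentially the same approach as the paper's: for (i) you reroute an off-diagonal crossing edge through the diagonal using $B=C$ to contradict maximality of the longest path, and for (ii) you exhibit a crossing path with $p(A)+1$ vertices and show no crossing path can be longer by folding it back into $A$ via $A=B$, with (iii) combining the two. The only difference is presentational: your explicit decomposition of $p(G)$ via $\ell^{\mathrm{in}}$ and $\ell^{\mathrm{out}}$ formalizes the single-crossing structure and the acyclicity bookkeeping that the paper's shorter contradiction arguments leave implicit.
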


\begin{proof}
  Let $\{A_i\}_{i=1}^n$ and $\{B_i\}_{i=1}^n$ denote the vertices of
  $A$ and $B$, respectively.
  \begin{itemize}
  \item[(i)] Assume that the longest path includes a path $(A_i,B_j)$
    with $j>i$, corresponding to a nonzero element in $C$.  Since
    $B=C$ there exists a path $(A_i,B_i,B_j)$, which is a
    contradiction. Thus, the longest path does not include any path
    represented in $C$ and (i) follows.
  \item[(ii)]
    Let $(A_i,\dots,A_j)$ be the
    longest path in $A$ with length $p(A)$. Clearly there is a path
    $(A_i,\dots,A_j,B_j)$ in $R$ with length $p(A)+1$. Assume that there
    is a path $(A_i,\dots,A_j,B_j,B_k,\dots,B_l)$ in $R$ with length $r
    > p(A)+1$. Then, since $A$ and $B$ are identical, there is a path
    $(A_i,\dots,A_j,A_k,\dots,A_l)$ in $A$ with length $r-1 > p(A)$,
    which is a contradiction. Thus, (ii) follows.
  \item[(iii)] (iii) follows from (i) and (ii). 
  \end{itemize}
  \end{proof}

\begin{lemma}\label{lemma:index_of_Xk_new}  
  For all $x = (x_1,\dots,x_j)$ in some neighborhood of $\bar{x} = (\bar{x}_1,\dots,\bar{x}_j)$, assume
  that $A(x) \in M_n(\Omega,m)$ and let $A(x)$ be $k$ times
  differentiable. Let
  \begin{align}
    X_0(x) & = A(x), \\
    Q_i(x,\epsilon) & =
    \begin{pmatrix}
      X_{i-1}(x_1,\dots, x_j) & \frac{X_{i-1}(x_1, \dots, x_{d_i}+\epsilon,\dots, x_j) - X_{i-1}(x_1,\dots, x_j)}{\epsilon}  \\
      0 & X_{i-1}(x_1, \dots, x_{d_i}+\epsilon,\dots, x_j)
    \end{pmatrix},\\
    X_i(x) & = \lim_{\epsilon \rightarrow 0} Q_i(x,\epsilon),\quad d_i \in \{1,\dots , j\}, \quad i = 1,\dots,k.
  \end{align}
  Then, 
  \begin{itemize}
  \item[(i)] for sufficiently small $\epsilon$, $Q_i(\bar{x},\epsilon) \in M_{2^in}(\Omega,(i+1)m)$
  \item[(ii)] $X_i(\bar{x}) \in M_{2^in}(\Omega,(i+1)m),\ i = 1,\dots,k$.
  \end{itemize}
\end{lemma}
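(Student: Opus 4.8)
The plan is to prove (i) and (ii) simultaneously by induction on $i$, separating the two requirements for membership in $M_{2^in}(\Omega,(i+1)m)$: that the spectrum lies in $\Omega$, and that the largest Jordan block has size at most $(i+1)m$. The index bound is the substantive part, and I would reduce it to the purely combinatorial statement that the longest path in the block-adjacency graph grows by exactly one at each level.

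First I would record the common structure. Both $Q_i(\bar x,\epsilon)$ and $X_i(\bar x)$ are block upper triangular at the level of $2^i$ blocks of size $n$, and every diagonal $n\times n$ block is a copy of $A$ evaluated either at $\bar x$ or at the point obtained from $\bar x$ by shifting its $d_i$-th coordinate by $\epsilon$. The spectral inclusion is then immediate, since the spectrum of a block triangular matrix is the union of the spectra of its diagonal blocks, and each such block lies in $M_n(\Omega,m)$ once $\epsilon$ is small enough that the shifted point remains in the neighborhood; hence the spectrum is contained in $\Omega$ in both cases.

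For the index I would argue through the reduced graph. Fixing an eigenvalue $\lambda\in\Omega$ and applying Lemma~\ref{lemma:hers} to $X_i(\bar x)-\lambda I$ (respectively $Q_i(\bar x,\epsilon)-\lambda I$) at the finest $n$-block level, every diagonal block is $A-\lambda I$ at some point of the neighborhood and therefore has index at most $m$, so the index of the whole matrix is bounded by $m$ times the number of vertices on the longest path in its reduced graph. It thus suffices to show this longest path has at most $i+1$ vertices. Let $\hat G_i$ denote the common worst-case reduced graph of $X_i$ and $Q_i$ at the $n$-block level, in which every structurally possible off-diagonal block is treated as nonzero; since adding edges can only lengthen paths, the estimate obtained from $\hat G_i$ remains a valid upper bound via Lemma~\ref{lemma:hers}. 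The recursion defining $X_i$ and $Q_i$ gives $\hat G_i=\begin{bmatrix}\hat G_{i-1} & D_i\\ 0 & \hat G_{i-1}\end{bmatrix}$, and the crucial structural point is that the off-diagonal super-block has the form $D_i=I+\hat G_{i-1}$: differentiating, or forming the finite difference of, the diagonal blocks of $X_{i-1}$, all equal to $A$, produces the blocks $A_{x_{d_i}}$ on the diagonal of $D_i$ and hence the identity pattern $I$, while the same operation applied to the off-diagonal blocks reproduces precisely the strictly upper triangular pattern $\hat G_{i-1}$. This places $\hat G_i$ in the setting of Lemma~\ref{lemma:longest_path} with $A=B=C=\hat G_{i-1}$, so case (iii) gives $p(\hat G_i)=p(\hat G_{i-1})+1$; starting from the single-vertex graph $\hat G_0$ with $p(\hat G_0)=1$, induction yields $p(\hat G_i)=i+1$.

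Combining the two ingredients, for every eigenvalue $\lambda$ the index of $X_i(\bar x)-\lambda I$ and of $Q_i(\bar x,\epsilon)-\lambda I$ is at most $m\,p(\hat G_i)=(i+1)m$, so the largest Jordan block of each matrix does not exceed $(i+1)m$; together with the spectral inclusion this establishes $X_i(\bar x)\in M_{2^in}(\Omega,(i+1)m)$ and $Q_i(\bar x,\epsilon)\in M_{2^in}(\Omega,(i+1)m)$, which are (ii) and (i). I expect the main obstacle to be justifying the structural identity $D_i=I+\hat G_{i-1}$ and the reduction to case (iii) of Lemma~\ref{lemma:longest_path}: one must check that the two diagonal super-blocks of $Q_i$, although evaluated at different points, share the common pattern $\hat G_{i-1}$ so that the hypothesis $A=B$ holds at the level of graphs, and that passing to the worst-case supergraph is legitimate because the bound of Lemma~\ref{lemma:hers} is monotone under the addition of edges.
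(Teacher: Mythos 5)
Your proposal is correct and follows essentially the same route as the paper: the same reduced-graph recursion $G_i = \left[\begin{smallmatrix} G_{i-1} & I+G_{i-1} \\ 0 & G_{i-1}\end{smallmatrix}\right]$ for both $X_i$ and $Q_i$, induction via case (iii) of Lemma~\ref{lemma:longest_path}, and the Friedland--Hershkowitz bound of Lemma~\ref{lemma:hers} combined with the diagonal blocks being evaluations of $A$ at points of the neighborhood (hence of index at most $m$ for small $\epsilon$). You merely make explicit some details the paper leaves implicit---the shift to $X_i(\bar{x})-\lambda I$, the monotonicity of the path bound under the worst-case (all structurally possible blocks nonzero) supergraph, and the identification $D_i = I+\hat G_{i-1}$---and your vertex-counting convention $p(\hat G_i)=i+1$ with bound $m\,p(\hat G_i)$ is the same bookkeeping as the paper's $p(G_i)=i$ with bound $(p(G_i)+1)m$, both yielding $(i+1)m$.
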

\begin{proof}
  The graphs (defined in Lemma~\ref{lemma:hers}) associated with
  $Q_i(\bar{x},\epsilon)$ and $X_i(\bar{x})$, of orders $2^i$,
  $i=0,\dots,k$ are represented by the adjacency matrices $G_i$
  defined by the recursion
  \begin{equation}
    \begin{aligned}
      G_0 & = 0 \\
      G_i & = 
      \begin{bmatrix}
        G_{i-1} & I+G_{i-1} \\
        0 & G_{i-1}
      \end{bmatrix}, \quad i = 1,\dots,k.
    \end{aligned}
  \end{equation}
  By induction and Lemma~\ref{lemma:longest_path} we have that $p(G_i)
  = i$.  The diagonal blocks of both $X_i(\bar{x})$ and $Q_i(\bar{x},\epsilon)$
  have Jordan blocks of size not exceeding $m$, for sufficiently small
  $\epsilon$. Therefore, we have by Lemma~\ref{lemma:hers} that the
  size of the largest Jordan blocks of $Q_i(\bar{x},\epsilon)$ and
  $X_i(\bar{x})$ do not exceed $(p(G_i)+1)m = (i+1)m$.
\end{proof}

\begin{proof}[Proof of Theorem~\ref{thm:exist_higher_deriv}]
  This proof consists essentially of two parts: 1) the existence of
  $f(X_k(\bar{x}))$ and 2) its equality with $F_k(\bar{x})$.
  
  The first part makes use of the characterization of the Jordan
  structure provided by Lemma~\ref{lemma:index_of_Xk_new}.  By
  Lemma~\ref{lemma:index_of_Xk_new}, the size of the largest Jordan
  block of $X_k(\bar{x})$ and $Q_k(\bar{x},\epsilon)$ is at most
  $(k+1)m$. Therefore, and since $f$ is $(k+1)m-1$ times continuously
  differentiable $f(X_k(\bar{x}))$ and $f(Q_k(\bar{x},\epsilon))$
  exist.

  The second part follows closely the proof
  of~\cite[Theorem~2.1]{Mathias_derivative_1996}, but makes use of
  induction to go to higher order derivatives.  We assume that the
  result holds for $k = i-1$, i.e.\ that $F_{i-1}(\bar{x}) =
  f(X_{i-1}(\bar{x}))$.
  We follow Mathias~\cite{Mathias_derivative_1996} and define 
  \begin{align}
    S =
    \begin{pmatrix}
      I & \epsilon^{-1}I \\
      0 & I
    \end{pmatrix}
  \end{align}
  with $\epsilon\neq 0$. Then
  \begin{align}
    f(X_i(\bar{x}))
    & =
    f( \lim_{\epsilon \rightarrow 0} Q_i(\bar{x},\epsilon))
    \\ & =
    \lim_{\epsilon \rightarrow 0} f( Q_i(\bar{x},\epsilon))
    \label{eq:lim_switch}
    \\ & =
    \lim_{\epsilon \rightarrow 0} 
    f \begin{pmatrix}
      X_{i-1}(\bar{x}) & \frac{X_{i-1}(\bar{x}_1, \dots, \bar{x}_{d_i}+\epsilon, \dots, \bar{x}_j) - X_{i-1}(\bar{x})}{\epsilon}  \\
      0 & X_{i-1}(\bar{x}_1, \dots, \bar{x}_{d_i}+\epsilon, \dots, \bar{x}_j)
    \end{pmatrix}
    \\ & = 
    \lim_{\epsilon \rightarrow 0} 
    S f \left(S^{-1}\begin{pmatrix}
      X_{i-1}(\bar{x}) & \frac{X_{i-1}(\bar{x}_1, \dots, \bar{x}_{d_i}+\epsilon, \dots, \bar{x}_j) - X_{i-1}(\bar{x})}{\epsilon}  \\
      0 & X_{i-1}(\bar{x}_1, \dots, \bar{x}_{d_i}+\epsilon, \dots, \bar{x}_j)
  \end{pmatrix}S\right)S^{-1} 
  \\ & = 
  \lim_{\epsilon \rightarrow 0} 
  S f \begin{pmatrix}
    X_{i-1}(\bar{x}) & 0  \\
    0 & X_{i-1}(\bar{x}_1, \dots, \bar{x}_{d_i}+\epsilon, \dots, \bar{x}_j)
  \end{pmatrix}S^{-1} 
  \\ & = 
  \lim_{\epsilon \rightarrow 0} 
  S \begin{pmatrix}
    f(X_{i-1}(\bar{x})) & 0  \\
    0 & f(X_{i-1}(\bar{x}_1, \dots, \bar{x}_{d_i}+\epsilon, \dots, \bar{x}_j))
  \end{pmatrix}S^{-1} 
  \\ & =
  \lim_{\epsilon \rightarrow 0} 
  \begin{pmatrix}
    f(X_{i-1}(\bar{x})) & \frac{f(X_{i-1}(\bar{x}_1, \dots, \bar{x}_{d_i}+\epsilon, \dots, \bar{x}_j)) - f(X_{i-1}(\bar{x}))}{\epsilon}  \\
    0 & f(X_{i-1}(\bar{x}_1, \dots, \bar{x}_{d_i}+\epsilon, \dots, \bar{x}_j))
  \end{pmatrix}
  \\ & =
  \begin{pmatrix}
    f(X_{i-1}(\bar{x})) & \left.\frac{\partial}{\partial x_{d_i}} f(X_{i-1}(x))\right|_{x=\bar{x}}   \\
    0 & f(X_{i-1}(\bar{x}))
  \end{pmatrix}
  \\ & =
  \begin{pmatrix}
    F_{i-1}(\bar{x}) & \left.\frac{\partial}{\partial x_{d_i}} F_{i-1}(x)\right|_{x=\bar{x}} \\
    0 & F_{i-1}(\bar{x})
  \end{pmatrix}
  \\ & =
  F_i(\bar{x})
  \end{align}
  The equality in \eqref{eq:lim_switch} is due to $f$ being continuous
  on $M_{2^in}(\Omega,(i+1)m)$, see~\cite[Lemma~1.1]{Mathias_derivative_1996}.
  The base case $k=0$ is trivially true.
\end{proof}

\section{Relation to higher order Fr\'echet derivatives}

\begin{theorem}\label{thm:exist_frechet}
  Assume that $A(x)=A_0+\sum_{i=1}^kx_iE_i \in M_n(\Omega,m)$ for all $x=(x_1,\dots,x_k)$ in some neighborhood of $x=0$
  and let $f$ be $(k+1)m-1$ times continuously
  differentiable on $\Omega$. Then, the $k$th Fr\'echet derivative
  $L_f^{(k)}(A_0,E_1,\dots,E_k)$ exists, is continuous in $A_0$ and
  $E_1,\dots,E_k$, and is given by the upper right $n\times n$ block
  of $\left.f(X_k(x))\right|_{x=0}$
  where, for $i = 0,\dots, k$, $X_i(x)$ is defined as in \eqref{eq:X_sequence}
  and $d_i = i$. 
\end{theorem}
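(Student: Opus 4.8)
The plan is to deduce the statement from Theorem~\ref{thm:exist_higher_deriv} together with an induction on $k$ that identifies the top right block of $f(X_k(0))$ with the $k$th Fr\'echet derivative. First I would observe that the affine path $A(x)=A_0+\sum_{i=1}^k x_iE_i$ is $C^\infty$, in particular $k$ times differentiable, and lies in $M_n(\Omega,m)$ near $0$ by hypothesis. Applying Theorem~\ref{thm:exist_higher_deriv} at $\bar x=0$ with $d_i=i$ then shows that $f(A(x))$ is $k$ times differentiable at $0$ and that $f(X_k(0))=F_k(0)$. Unwinding the recursion for $F_i$, the top right $n\times n$ block of $F_i$ is $\partial_{x_i}$ applied to the top right $n\times n$ block of $F_{i-1}$, so by induction the top right $n\times n$ block of $F_k(0)$ equals the mixed partial $\left.\frac{\partial^k}{\partial x_1\cdots\partial x_k}f(A(x))\right|_{x=0}$. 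It therefore remains to show that this mixed partial is exactly $L_f^{(k)}(A_0,E_1,\dots,E_k)$ and that the latter exists and is continuous.

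I would establish this last point by induction on $k$ using the recursive definition~\eqref{eq:frechet_higher_def}. The base case $k=1$ is Mathias' identity~\eqref{eq:mathias_intro}, which gives $L_f(A_0,E_1)$ as the top right block of $f(X_1(0))$ and yields existence and continuity under the regularity $2m-1=(k+1)m-1$. For the inductive step, assume the claim for order $k-1$; in particular $L_f^{(k-1)}(B,E_1,\dots,E_{k-1})$ exists, is continuous in the base point $B$, and equals the top right block of $f(X_{k-1})$ built from the path $B+\sum_{i=1}^{k-1}x_iE_i$. The key observation is that replacing the base point $A_0$ by $A_0+tE_k$ amounts to evaluating $X_{k-1}(x)$ at $x=(0,\dots,0,t)$, since $X_{k-1}$ differentiates only in $x_1,\dots,x_{k-1}$ while still depending on $x_k$ through $A(x)$. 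Consequently the difference quotient in $t$ that defines $L_f^{(k)}$ through~\eqref{eq:frechet_higher_def} is precisely the $x_k$ difference quotient of the top right block of $f(X_{k-1}(x))$, whose limit as $t\to0$ exists and equals the mixed partial $\partial_{x_1}\cdots\partial_{x_k}f(A(x))|_{x=0}$ by the first paragraph. This simultaneously proves the existence of $L_f^{(k)}$ and the formula identifying it with the top right block of $f(X_k(0))$.

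Continuity of $L_f^{(k)}$ in $A_0,E_1,\dots,E_k$ I would then read off from the block formula: the entries of $X_k(0)$ depend polynomially on those of $A_0,E_1,\dots,E_k$, by Lemma~\ref{lemma:index_of_Xk_new} the matrix $X_k(0)$ lies in $M_{2^kn}(\Omega,(k+1)m)$, and $f$ is continuous on this class by~\cite[Lemma~1.1]{Mathias_derivative_1996} under the assumed $(k+1)m-1$ regularity, so the composition is continuous. The main obstacle is the inductive step, namely reconciling the recursive definition~\eqref{eq:frechet_higher_def}---which differentiates the lower order derivative with respect to its base point and carries an $o(\|E_k\|)$ remainder---with the mixed partial and block picture while using only $(k+1)m-1$ derivatives of $f$. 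This sharp regularity count is exactly why the argument must be routed through Theorem~\ref{thm:exist_higher_deriv}, whose Jordan bound $(k+1)m$ rests on the longest path estimate of Lemma~\ref{lemma:longest_path}; iterating the first order identity~\eqref{eq:mathias_intro} directly would instead force the much stronger requirement of $2km-1$ derivatives, since at the $i$th level the base matrix already has Jordan blocks of size up to $im$.
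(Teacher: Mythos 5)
Your skeleton follows the paper's route (Theorem~\ref{thm:exist_higher_deriv} to identify the top right block of $f(X_k(0))$ with the mixed partial, plus the improved Jordan bound to get the sharp regularity $(k+1)m-1$), but there is a genuine gap in your inductive step. The recursion~\eqref{eq:frechet_higher_def} is a Fr\'echet-type condition: $L_f^{(k)}$ must be $k$-linear and must satisfy $L_f^{(k-1)}(A_0+E_k,E_1,\dots,E_{k-1})-L_f^{(k-1)}(A_0,E_1,\dots,E_{k-1})=L_f^{(k)}(A_0,E_1,\dots,E_k)+o(\|E_k\|)$ as $E_k\to0$ through \emph{arbitrary} matrix increments. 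What you actually prove is that the scalar difference quotient along the ray $t\mapsto A_0+tE_k$ converges, i.e.\ a G\^ateaux-type directional derivative of $L_f^{(k-1)}$ in its base point. That ray limit existing for each fixed $E_k$ does not by itself give the Fr\'echet property: you still owe (a) linearity of the limit map in $E_k$ (linearity in $E_1,\dots,E_{k-1}$ comes from the induction hypothesis, but additivity in $E_k$ of a G\^ateaux limit is not automatic), and (b) the uniform $o(\|E_k\|)$ remainder. The continuity you invoke only afterwards, to "read off" continuity of $L_f^{(k)}$, is in fact a needed \emph{ingredient}: since the ray derivative exists at every base point near $A_0$ and equals the continuous map $(A_0,E_1,\dots,E_k)\mapsto[f(X_k(0))]_{1,2^k}$ (Lemma~\ref{lemma:index_of_Xk_new} together with~\cite[Lemma~1.1]{Mathias_derivative_1996}), a mean-value/integral argument of the form $L_f^{(k-1)}(A_0+E_k,\cdot)-L_f^{(k-1)}(A_0,\cdot)=\int_0^1 G(A_0+sE_k;\cdot,E_k)\,ds$ with $G$ the candidate derivative is what converts directional differentiability plus continuity into the uniform remainder. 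As written, your claim that the ray limit "simultaneously proves the existence of $L_f^{(k)}$" is not justified.

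For comparison, the paper's proof does not re-derive this induction at all: it reuses the proof of~\cite[Theorem~3.5]{HighamRelton_Frechet_2014} --- which contains exactly the G\^ateaux-to-Fr\'echet upgrade and the multilinearity argument you are missing --- and merely substitutes the Jordan bound $(k+1)m$ of Lemma~\ref{lemma:index_of_Xk_new} for the crude bound $2^km$, so that $(k+1)m-1$ continuous derivatives suffice for the existence and continuity of $f(X_k(0))$. Your first paragraph (top right block equals the mixed partial, by unwinding the $F_i$ recursion) matches the paper, and your closing regularity accounting --- in particular the observation that naively iterating~\eqref{eq:mathias_intro} through the levels would cost roughly $2km-1$ derivatives --- is sound. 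So the missing piece is precisely the part the paper delegates to Higham and Relton; to make your blind proof self-contained you must either supply the integral/continuity argument sketched above or cite their proof as the paper does.
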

\begin{proof}
  This is essentially a strengthening
  of~\cite[Theorem~3.5]{HighamRelton_Frechet_2014} which requires
  $2^km-1$ continuous derivatives of $f$. The result follows from the
  original proof with the characterization of the Jordan structure
  provided by Lemma~\ref{lemma:index_of_Xk_new} of the present
  work. With $A(x)=A_0+\sum_{i=1}^kx_iE_i$ and $d_i=i,\ i=1,\dots,k$,
  the sequence of matrices $X_i(x),\ i = 0,\dots,k$ is given by
  \begin{equation}\label{eq:recursive_frechet}
    \begin{aligned}
      X_0(x) & = A_0+\sum_{i=1}^kx_iE_i,  \\
      X_i(x) & = 
      \begin{bmatrix}
        X_{i-1}(x) & I_{2^{i-1}} \otimes E_i  \\
        0 & X_{i-1}(x)
      \end{bmatrix}, \quad i = 1,\dots,k.
    \end{aligned}
  \end{equation}
    which is used in (3.3) of~\cite{HighamRelton_Frechet_2014} (with
    $x=0$).  By Theorem~\ref{thm:exist_higher_deriv}, the upper right
    $n\times n$ block of $f(X_k(0))$ is
    \begin{align}
      \left[f(X_k(0))\right]_{1,k} = \left.\frac{\partial^k}{\partial x_1 \dots \partial x_k} f(A(x))\right|_{x=0}.
    \end{align}
    Since the size of the largest Jordan block in $X_k(0)$ is at most
    $(k+1)m$, we have that $(k+1)m-1$ continuous derivatives of $f$ is
    sufficient for the existence and continuity of $f(X_k(0))$.
\end{proof}

\begin{lemma}\label{lem:lemma_deriv_of_Lf}
  Let $A(x)$ and $E_i(x),\ i=1,\dots,k$ be
  in $M_n(\Omega,m)$ for all $x$ in some neighborhood of
  $\bar{x}=(\bar{x}_1,\dots,\bar{x}_j)$ and differentiable at
  $\bar{x}$.
  Assume that $f$ is $(k+2)m-1$ times continuously differentiable on $\Omega$.
  Then the partial derivative 
  \begin{multline}  
    {\textstyle\frac{\partial}{\partial x_d}} \left.L_f^{(k)}(A(x),E_1(x),\dots,E_k(x))\right|_{x=\bar{x}}
    = \\
    \begin{aligned}
    & L_f^{(k+1)}(A(\bar{x}),E_1(\bar{x}),\dots,E_k(\bar{x}),{\textstyle\frac{\partial}{\partial x_d}} A(x)|_{x=\bar{x}})
    \\ & +
    \sum_{i=1}^k L_f^{(k)}(A(\bar{x}),E_1(\bar{x}),\dots, E_{i-1}(\bar{x}),{\textstyle\frac{\partial}{\partial x_d}} E_i(x)|_{x=\bar{x}},E_{i+1}(\bar{x}),\dots,E_k(\bar{x})).
    \end{aligned}
  \end{multline}
\end{lemma}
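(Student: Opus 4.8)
The plan is to reduce the derivative of the $k$th Fréchet derivative to a
$(k+1)$st order mixed partial derivative of $f$ evaluated through the block upper
triangular form, and then read off the claimed identity by inspecting the
appropriate block. The key observation is that each $E_i(x)$ now depends on $x$,
so I would introduce an auxiliary multivariate path. Specifically, consider the
matrix
\begin{equation}
  A(y_0,y_1,\dots,y_k;x) = A(x) + \sum_{i=1}^k y_i E_i(x),
\end{equation}
where $y_1,\dots,y_k$ are the ``linear'' directions used to generate the Fréchet
derivative and $x$ is the variable we differentiate along. By
\eqref{eq:higher_linear_partial}, together with Theorem~\ref{thm:exist_frechet}
and its continuity statement, the $k$th Fréchet derivative
$L_f^{(k)}(A(x),E_1(x),\dots,E_k(x))$ equals the mixed partial
$\partial^k/\partial y_1\cdots\partial y_k$ of $f(A(y;x))$ evaluated at
$y_1=\dots=y_k=0$. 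Differentiating once more with respect to $x_d$ therefore
produces the $(k+1)$st order mixed partial, and the whole object is an instance
of \eqref{eq:higher_deriv}.

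First I would verify that the regularity hypothesis is exactly right: the
combined object is a $(k+1)$st order derivative, so by
Theorem~\ref{thm:exist_higher_deriv} (with the effective Jordan bound growing to
$(k+2)m$) one needs $f$ to be $(k+2)m-1$ times continuously differentiable, which
is precisely what the lemma assumes. This also guarantees that all derivatives
below exist and that the order of differentiation can be interchanged, so that
$\partial/\partial x_d$ commutes with the $y_i$-derivatives. Next I would apply
the multivariate chain rule (or equivalently Leibniz differentiation under the
established smoothness) to
$\partial/\partial x_d\,\partial^k/\partial y_1\cdots\partial y_k\,
f(A(y;x))|_{y=0}$. Since
\begin{equation}
  {\textstyle\frac{\partial}{\partial x_d}} A(y;x)
  = {\textstyle\frac{\partial}{\partial x_d}} A(x)
  + \sum_{i=1}^k y_i\, {\textstyle\frac{\partial}{\partial x_d}} E_i(x),
\end{equation}
the $x_d$-derivative of the argument splits into the path derivative
$\partial_{x_d}A$ plus a sum of terms each carrying a single factor $y_i$.

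The decisive step is to match these contributions with the two terms on the right
of the claimed formula. The term $\partial_{x_d}A(x)|_{\bar x}$, being
$y$-independent, contributes an extra differentiation direction that is itself set
to zero, so it behaves exactly like an additional linear perturbation: it yields
the $(k+1)$st Fréchet derivative
$L_f^{(k+1)}(A(\bar x),E_1(\bar x),\dots,E_k(\bar x),\partial_{x_d}A|_{\bar x})$.
Each term $y_i\,\partial_{x_d}E_i(x)$ instead replaces the $i$th direction
$E_i(\bar x)$ by $\partial_{x_d}E_i|_{\bar x}$: the factor $y_i$ cancels exactly
one $\partial/\partial y_i$ (the remaining $y_i$-derivatives of this factor vanish
when evaluated at $y=0$ because they leave no $y_i$ to differentiate), leaving a
$k$th Fréchet derivative with direction $i$ replaced. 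Summing over $i$ gives the
second term. The main obstacle I anticipate is bookkeeping: one must argue
carefully that higher powers of $y_i$ and cross terms drop out upon evaluation at
$y=0$, and that the continuity and interchange-of-derivatives arguments are
justified by the common $(k+2)m-1$ differentiability bound via
Lemma~\ref{lemma:index_of_Xk_new}. Once the term-by-term correspondence is
established, the identity follows directly.
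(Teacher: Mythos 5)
Your reduction to mixed partials of the auxiliary path $B(y;x)=A(x)+\sum_{i=1}^k y_iE_i(x)$ is a genuinely different route from the paper's, but as written it has two gaps, and the decisive one is that your ``term-matching'' step is asserted rather than proved. The statement that the factor $y_i$ in $y_i\,\partial_{x_d}E_i(x)$ ``cancels exactly one $\partial/\partial y_i$'' is precisely the content of the lemma (indeed, it is a special case of the Fa\`a di Bruno-type expansion that the paper later proves as Theorem~\ref{thm:derivative_as_linear_combination_of_frechet} \emph{using} this lemma), and you cannot obtain it from the ``multivariate chain rule'' as you propose: the classical chain rule for $f\circ B$ would require $f$ to be $(k+1)$ times differentiable as a map on an \emph{open} subset of matrix space, whereas in this setting the higher Fr\'echet derivatives of $f$ are only guaranteed to exist on the non-open set $M_n(\Omega,m)$ (Theorem~\ref{thm:exist_frechet}); this restriction is the very reason the paper works through block triangular forms. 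To make your sketch rigorous you would need a concrete mechanism --- for instance introducing auxiliary variables $s_i$ for the products $y_it$ and invoking continuity of the mixed partials via Theorem~\ref{thm:exist_frechet} --- none of which appears in the proposal. A second gap: the hypotheses of the lemma do not guarantee $B(y;x)\in M_n(\Omega,m)$ for $y\neq 0$, so Theorems~\ref{thm:exist_higher_deriv} and~\ref{thm:exist_frechet} cannot simply be invoked along your path. The Jordan bound is not stable under sums: with $A=0$, $E_1=\left[\begin{smallmatrix}1&1\\0&-1\end{smallmatrix}\right]$, $E_2=\left[\begin{smallmatrix}-1&0\\0&1\end{smallmatrix}\right]$, both directions are diagonalizable while $E_1+E_2$ is a nilpotent Jordan block, so $m$ jumps from $1$ to $2$ along $y_1=y_2$.

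The paper avoids both problems by never leaving the actual path values: it writes the $x_d$-derivative as a difference quotient, observes that the perturbed base point is exactly $A(\bar{x}+he_d)$ (which lies in $M_n(\Omega,m)$ by hypothesis), peels off the base-point increment with the defining property \eqref{eq:frechet_higher_def} of $L_f^{(k+1)}$, peels off the direction increments using linearity of $L_f^{(k)}$ in each $E_i$ together with continuity, and lets $h\rightarrow 0$; the $L_f^{(k+1)}$ term and the sum of $L_f^{(k)}$ terms then appear directly, with all cross terms absorbed into $o(h)$. Your plan could likely be repaired along the lines indicated above, but in its current form the central combinatorial identity is assumed rather than derived, so the proposal does not yet constitute a proof.
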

\begin{proof} 
  \begin{align}
    & \frac{\partial}{\partial x_d} \left.(L_f^{(k)}(A(x),E_1(x),\dots,E_k(x))\right|_{x=\bar{x}}
    \\
      & 
      \begin{aligned}[t]
        = \lim_{h\rightarrow 0} \frac{1}{h}(&
        \begin{aligned}[t]L_f^{(k)}(&A(\bar{x})+h{\textstyle \frac{\partial}{\partial x_d}}A(x)|_{x=\bar{x}}+o(h),
          \\& E_1(\bar{x})+h{\textstyle\frac{\partial}{\partial x_d}}E_1(x)|_{x=\bar{x}}+o(h),
          \dots,
          \\& E_k(\bar{x})+h{\textstyle\frac{\partial}{\partial x_d}}E_k(x)|_{x=\bar{x}}+o(h))
        \end{aligned}
        \\
        &-L_f^{(k)}(A(\bar{x}),E_1(\bar{x}),\dots,E_k(\bar{x}))) 
      \end{aligned}
      \\
      & 
      \begin{aligned}[t]
        = \lim_{h\rightarrow 0} \frac{1}{h}(&
        \begin{aligned}[t]L_f^{(k)}(&A(\bar{x}),E_1(\bar{x})+h{\textstyle\frac{\partial}{\partial x_d}}E_1(x)|_{x=\bar{x}}+o(h),\dots,
          \\& E_k(\bar{x})+h{\textstyle\frac{\partial}{\partial x_d}}E_k(x)|_{x=\bar{x}}+o(h))
        \end{aligned}
        \\
        &+
        \begin{aligned}[t]
          L_f^{(k+1)}(&A(\bar{x}),E_1(\bar{x})+h{\textstyle\frac{\partial}{\partial x_d}}E_1(x)|_{x=\bar{x}}+o(h),\dots,
          \\&
          E_k(\bar{x})+h{\textstyle\frac{\partial}{\partial x_d}}E_k(x)|_{x=\bar{x}}+o(h), h{\textstyle \frac{\partial}{\partial x_d}}A(x)|_{x=\bar{x}}+o(h))
        \end{aligned}          
        \\
        &+
        o(h\|{\textstyle \frac{\partial}{\partial x_d}}A(x)|_{x=\bar{x}}\|)
        \\
        &-L_f^{(k)}(A(\bar{x}),E_1(\bar{x}),\dots,E_k(\bar{x})))
      \end{aligned}
      \\
      & 
      \begin{aligned}[t]
        = &\ L_f^{(k+1)}(A(\bar{x}),E_1(\bar{x}),\dots,E_k(\bar{x}),{\textstyle\frac{\partial}{\partial x_d}} A(x)|_{x=\bar{x}})
        \\ & +
        \sum_{i=1}^k L_f^{(k)}(A(\bar{x}),E_1(\bar{x}),\dots, E_{i-1}(\bar{x}),{\textstyle\frac{\partial}{\partial x_d}} E_i(x)|_{x=\bar{x}},E_{i+1}(\bar{x}),\dots,E_k(\bar{x}))      
      \end{aligned}
  \end{align}
  Here we used the definition and the linearity of the higher order
  Fr\'echet derivative~\eqref{eq:frechet_higher_def}.  
\end{proof}

We define the multi-set $S_\alpha^k$ of partitions of a multi-index
$\alpha$ into $k$ nonzero multi-indices with the following
recursion. For $1 < k \leq |\alpha|$, partition $\alpha = \beta +
\gamma$ with $|\gamma| = 1$ and let
\begin{align}\label{eq:S_definition}
  S_{\alpha}^k = \Big\{& s:\, s=\{u_1,\dots,u_{k-1},\gamma\},\, u \in S_\beta^{k-1} \textrm{ and }\\& s:\, s=\{u_1,\dots,u_{j}+\gamma,\dots,u_{k},\}_{j=1}^k,\, u \in S_\beta^{k}\Big\}\nonumber
\end{align}
where $S_\alpha^0 = \emptyset$, $S_\alpha^1 = \{\alpha\}$, and
$S_\alpha^k = \emptyset$ for $k>|\alpha|$.  Note that the recursion
may generate duplicate partitions that are all members, making
$S_\alpha^k$ a multi-set.  The partitions themselves are also
multi-sets; order within a partition does not matter here.  For each
partition $s$ in $S_\alpha^k$, let $s_1,\dots,s_k$ be an arbitrary
enumeration of its members. The members of a partition fulfil $s_i \leq \alpha $ and
$|s_i|>0$, for $i = 1\dots k$ and $\sum_{i=1}^k s_i = \alpha$.
As an example, with $\alpha=(2,1)$ and $k=2$ we get
\begin{align} \label{eq:S_set_example}
  S_{(2,1)}^2 = \Big\{ & \big\{ (2,0),(0,1) \big\}, \\
                     & \big\{(1,1),(1,0) \big\}, \nonumber\\
                     & \big\{(1,1),(1,0) \big\}  \nonumber
              \Big\}.
\end{align}

For convenience, we will sometimes use the shorthand notation $A^{(\alpha)} =
A^{(\alpha)}(\bar{x}) = \partial^\alpha A(x)|_{x=\bar{x}}$ and
$A=A(\bar{x})$.

\begin{theorem}\label{thm:derivative_as_linear_combination_of_frechet}
  Let $A(x)$ be $k$ times differentiable at $\bar{x}=(\bar{x}_1,\dots,\bar{x}_j)$ and
  assume that $A(x) \in M_n(\Omega,m)$ for all $x$ in some neighborhood of
  $\bar{x}$. 
  Let $f$ be $(k+1)m-1$ times continuously differentiable on $\Omega$ and assume that $|\alpha|\leq k$. Then,
  \begin{align}
    \partial^\alpha f(A(x))|_{x=\bar{x}} = \sum_{i=1}^{|\alpha|} \sum_{s\in S_\alpha^i} L_f^{(i)}(A(\bar{x}),A^{(s_1)}(\bar{x}),\dots,A^{(s_i)}(\bar{x}))
  \end{align}
\end{theorem}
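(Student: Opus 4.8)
The plan is to prove the identity by induction on $|\alpha|$, using Lemma~\ref{lem:lemma_deriv_of_Lf} to carry out each differentiation and the recursion~\eqref{eq:S_definition} for $S_\alpha^i$ to bookkeep the resulting terms. For the base case $|\alpha|=1$, write $\alpha=e_d$; then $S_\alpha^1=\{\alpha\}$ and the claim reduces to the first-order chain rule $\partial_d f(A(x))|_{x=\bar{x}}=L_f\bigl(A(\bar{x}),A^{(\alpha)}(\bar{x})\bigr)$, which is the first-order instance of~\eqref{eq:higher_deriv} recalled in the introduction.

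For the inductive step with $|\alpha|>1$, I would choose $d$ with $\alpha_d\geq 1$ and split $\alpha=\beta+\gamma$ with $\gamma=e_d$, so that $\partial^\alpha=\partial_d\partial^\beta$ and $|\beta|=|\alpha|-1$. Applying the inductive hypothesis at every point of a neighborhood of $\bar{x}$ (the lower-order derivatives of $A$ that it involves exist there by assumption) yields the identity of functions
\begin{equation*}
  \partial^\beta f(A(x))=\sum_{i=1}^{|\beta|}\sum_{u\in S_\beta^i}L_f^{(i)}\bigl(A(x),A^{(u_1)}(x),\dots,A^{(u_i)}(x)\bigr),
\end{equation*}
to which I apply $\partial_d$ and evaluate at $\bar{x}$. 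Differentiating each summand with Lemma~\ref{lem:lemma_deriv_of_Lf}, taking $E_j(x)=A^{(u_j)}(x)$ so that $\partial_d A|_{\bar{x}}=A^{(\gamma)}$ and $\partial_d E_j|_{\bar{x}}=A^{(u_j+\gamma)}$, turns each term into one order-$(i+1)$ contribution $L_f^{(i+1)}\bigl(A,A^{(u_1)},\dots,A^{(u_i)},A^{(\gamma)}\bigr)$ together with $i$ order-$i$ contributions $L_f^{(i)}\bigl(A,\dots,A^{(u_j+\gamma)},\dots\bigr)$, $j=1,\dots,i$.

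The crux---and the step I expect to demand the most care---is to reorganize these terms by the order of the Fr\'echet derivative and recognize that they reproduce exactly the two branches of~\eqref{eq:S_definition} for the same $\gamma$: appending $\gamma$ as a fresh member sends $u\in S_\beta^{i-1}$ to a member of $S_\alpha^i$, while adjoining $\gamma$ to an existing member $u_j$ of some $u\in S_\beta^i$ produces the remaining members of $S_\alpha^i$. Since $S_\alpha^i$ is a multi-set, I must verify that this correspondence is a multiplicity-preserving bijection between the differentiated terms and the partitions in $\bigcup_i S_\alpha^i$, so that every $L_f^{(i)}$ appears with precisely the multiplicity prescribed by $S_\alpha^i$; the extreme orders are covered by the conventions $S_\beta^0=\emptyset$, $S_\beta^{i}=\emptyset$ for $i>|\beta|$, and $S_\beta^1=\{\beta\}$. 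Collecting the regrouped terms then gives $\sum_{i=1}^{|\alpha|}\sum_{s\in S_\alpha^i}L_f^{(i)}\bigl(A,A^{(s_1)},\dots,A^{(s_i)}\bigr)$, as claimed.

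Finally, I would settle the regularity bookkeeping. Applying Lemma~\ref{lem:lemma_deriv_of_Lf} to an order-$i$ Fr\'echet derivative requires $f$ to be $(i+2)m-1$ times continuously differentiable; the largest order reached in the inductive hypothesis is $|\beta|=|\alpha|-1$, so $(|\alpha|+1)m-1$ derivatives suffice, which is guaranteed by the hypothesis $(k+1)m-1$ since $|\alpha|\leq k$. The existence and continuity of the Fr\'echet derivatives at each stage follow from Theorem~\ref{thm:exist_frechet}, and their multilinearity accommodates the directions $A^{(u_j)}$ irrespective of their spectra, closing the induction.
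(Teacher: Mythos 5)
Your proposal is correct and follows essentially the same route as the paper's own proof: induction on $|\alpha|$ via the split $\alpha=\beta+\gamma$ with $|\gamma|=1$, term-by-term differentiation using Lemma~\ref{lem:lemma_deriv_of_Lf}, and regrouping of the resulting order-$(i+1)$ and order-$i$ contributions through the recursion~\eqref{eq:S_definition} with the conventions $S_\beta^0=S_\beta^{|\beta|+1}=\emptyset$. Your regularity bookkeeping, requiring $(|\alpha|+1)m-1\leq(k+1)m-1$ continuous derivatives when Lemma~\ref{lem:lemma_deriv_of_Lf} is applied at the top order $i=|\alpha|-1$, likewise matches the paper's hypotheses.
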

\begin{proof}
  Assume that $|\alpha|>1$ and partition $\alpha = \beta + \gamma$ with $|\gamma| = 1$, and assume that the result holds for $\left.\partial^\beta f(A(x))\right|_{x=\bar{x}}$. Then,
  \begin{align}
    \left.\partial^\alpha f(A(x))\right|_{x=\bar{x}}
    &= \left.\partial^\gamma\partial^\beta f(A(x))\right|_{x=\bar{x}}
    \\&=
    \partial^\gamma
    \sum_{i=1}^{|\beta|} \sum_{s\in S_\beta^i} L_f^{(i)}(A(x),A^{(s_1)}(x),\dots,A^{(s_i)}(x))\Big{|}_{x=\bar{x}}
    \\&
    \begin{aligned}[t]
      =\sum_{i=1}^{|\beta|} \sum_{s\in S_\beta^i} & L_f^{(i+1)}(A,A^{(s_1)},\dots,A^{(s_i)},A^{(\gamma)})
      \\&
      +\sum_{j=1}^i L_f^{(i)}(A,A^{(s_1)},\dots,A^{(s_j+\gamma)},\dots,
      A^{(s_i)})
    \end{aligned}\label{eq:use_lemma_deriv_of_Lf}
    \\&
    \begin{aligned}[t]
      =\sum_{i=1}^{|\beta|+1} & \sum_{s\in S_\beta^{i-1}} L_f^{(i)}(A,A^{(s_1)},\dots,A^{(s_{i-1})},A^{(\gamma)})
      \\&
      +\sum_{s\in S_\beta^{i}}\sum_{j=1}^i L_f^{(i)}(A,A^{(s_1)},\dots,A^{(s_j+\gamma)},\dots,
      A^{(s_i)})
    \end{aligned}\label{eq:shift_index}
    \\&
    =\sum_{i=1}^{|\alpha|} \sum_{s\in S_\alpha^i} L_f^{(i)}(A,A^{(s_1)},\dots,A^{(s_i)})
  \end{align}
  In~\eqref{eq:use_lemma_deriv_of_Lf} we used
  Lemma~\ref{lem:lemma_deriv_of_Lf}. In~\eqref{eq:shift_index}, note that $S_\beta^0 = S_\beta^{|\beta|+1} = \emptyset$.
  Finally, we used the recursive definition of $S_{\alpha}^i$ in
  \eqref{eq:S_definition}.
  The base case with $|\alpha|=1$ reduces to $\partial^\alpha
  f(A(x))|_{x=\bar{x}} = L_f^{(1)}(A(\bar{x}),A^{(\alpha)}(\bar{x}))$.
\end{proof}

\section{A generalized Dalecki{\u\i}-Kre{\u\i}n formula}
The block upper triangular formulation in Theorem~\ref{thm:exist_higher_deriv} opens the way to
numerous possibilities for the evaluation of higher order derivatives
of matrix functions. In particular, the formulation can be used for
alternative derivation of a number of existing methods, and may be
useful for their further development and analysis. As an example, we
take here the route via Schur decomposition and the explicit formula
for the elements of matrix functions of triangular matrices
provided by Descloux~\cite{Descloux_1963, davis_1973}.
The entries of a matrix function of an upper triangular matrix $U$ is
given by $(f(U))_{ii} = f(\lambda_i), i = 1,\dots, n$ and for $j>i$,
\begin{multline}\label{eq:descloux}
  (f(U))_{ij} = \\ \sum_{m=1}^{j-i} \sum_{i < k_1 < k_2 < \dots < k_{m-1} < j} U_{i,k_1}U_{k_1,k_2}\dots U_{k_{m-1},j}f[\lambda_i,\lambda_{k_1},\dots,\lambda_{k_{m-1}},\lambda_{j}]
\end{multline}
where $f[x_1,x_2,\dots,x_k]$ is the $k$th order divided difference and $\{\lambda_i\}$ are the eigenvalues and diagonal entries of $U$.
The matrix $U$ can be seen as the adjacency matrix of a directed
acyclic graph. The second sum above runs over all possible paths
between vertex $i$ and vertex $j$ that pass exactly
$m$ edges along the way.
The first sum runs over all
possible path lengths between vertex $i$ and vertex $j$. 
In general, the cost of evaluating the above expression is too high
for it to be of practical use. However, in certain cases with zero
structure within the upper triangle of $U$ the expression may be
useful, provided that the zeros sufficiently limits the number and/or
lengths of paths in the graph.

Here, we will need the multi-set $T_\alpha^k$ containing all possible
permutations of each partition in $S_\alpha^k$ introduced in the
previous section. $T_\alpha^k$ is a multi-set of $k$-tuples and has
$|T_\alpha^k| = k!|S_\alpha^k|$ members. In contrast to $S_\alpha^k$,
order within a partition does matter in this case. For the example
with $\alpha=(2,1)$ and $k=2$ considered in \eqref{eq:S_set_example}
we get that
\begin{align}
  T_{(2,1)}^2 = \Big\{ & \big((2,0),(0,1)\big) , \big((0,1),(2,0)\big), \\
                     & \big((1,1),(1,0)\big),\big((1,0),(1,1)\big), \nonumber\\
                     & \big((1,1),(1,0)\big),\big((1,0),(1,1)\big) \nonumber  
              \Big\}.
\end{align}

We consider first the general case.  Consider a $k$th order derivative
and the sequence of matrices $X_i, i = 0,\dots, k$ provided in
Theorem~\ref{thm:exist_higher_deriv}.  We let $U = Q^*A(\bar{x})Q$ be
a Schur form of $A(\bar{x})$. Then, $U_k =
\diag_k(Q^*)X_k(\bar{x})\diag_k(Q)$ is a Schur form of $X_k(\bar{x})$,
where $\diag_k(Q)$ is the block diagonal matrix where $Q$ is repeated
$2^k$ times on the diagonal.  The $k$th order derivative is given by
$Q\left((f(U_k))_{1,2^k}\right)Q^*$ and can be computed using the QR
algorithm for the Schur form of $A(\bar{x})$ and the Descloux formula
\eqref{eq:descloux} for the upper right $n\times n$ block of $f(U_k)$.

An important special case is when $A(\bar{x})$ is Hermitian which
makes $U$ and the diagonal blocks of $U_k$ diagonal. In this case, the
number of edges along any path in $U_k$ is bounded by $k$ and we get that  
\begin{multline}\label{eq:daleckii_krein_generalized}
  \left(Q^*(\partial^\alpha f(A(x))|_{x=\bar{x}})Q\right)_{i,j} = \\ \sum_{m=1}^{|\alpha|}\sum_{t\in T_{\alpha}^m} \sum_{k_1=1}^n \dots \sum_{k_{m-1}=1}^n
  U_{i,k_1}^{(t_1)} U_{k_1,k_2}^{(t_2)}\dots U_{k_{m-1},j}^{(t_{m})}f[\lambda_i,\lambda_{k_1},\dots,\lambda_{k_{m-1}},\lambda_j]
\end{multline}
where $U^{(\alpha)} = Q^*A^{(\alpha)}Q$ and $\lambda_i,\, i=1,\dots,
n$ are the eigenvalues of $A$. Note that $U^{(\alpha)}$ are usually
not upper triangular but are blocks of the upper triangular matrix
$U_k$. This can be seen as a generalization of the Dalecki{\u\i}-Kre{\u\i}n
formula \eqref{eq:daleckii_krein_intro} for first order~\cite{DaleckiiKrein_1965}, 
in the present context given by 
\begin{align}\label{eq:daleckii_krein}
  (Q^*(\partial^\alpha f(A(x))|_{x=\bar{x}})Q)_{i,j} & = U^{(\alpha)}_{i,j}f[\lambda_i,\lambda_j], \quad \textrm{with } |\alpha | = 1.
\end{align}
In the case of second order with $\alpha = \beta + \gamma$, $|\beta| =
|\gamma| = 1$, we get
\begin{multline}\label{eq:second_order_explicit}
  (Q^*(\partial^\alpha f(A(x))|_{x=\bar{x}})Q)_{i,j} = \\
  U^{(\alpha)}_{i,j}f[\lambda_i,\lambda_j] + 
  \sum_{k=1}^n U_{i,k}^{(\beta)} U_{k,j}^{(\gamma)} f[\lambda_i,\lambda_k,\lambda_j] +
  \sum_{k=1}^n U_{i,k}^{(\gamma)} U_{k,j}^{(\beta)} f[\lambda_i,\lambda_k,\lambda_j]
\end{multline}
which has previously been presented in~\cite{HornJohnson_topics_1991}
for the non-mixed $\beta = \gamma$ case.
We show in Appendix~\ref{app:perturbation_theory} that further specialization of the above
formulas lead to classical formulas for time-independent perturbation
theory of quantum mechanics. 
We expect that other specializations will be useful in future
development of computational methods for higher order response
calculations, for example using alternative distribution
functions~\cite{Niklasson_canonical_pert_2015}.

\section{Relation to complex step approximations}
Let the function $f(x)$ be real-valued for real $x$ and assume
that $f$ is complex differentiable. 
Then, the complex step approximation 
\begin{equation}
  f'(x_0) = \textrm{Im}(f(x_0+ih))/h +O(h^2)
\end{equation}
can
be used to evaluate the derivative numerically \cite{complex_step_original}. 
In contrast to regular finite difference approximations, this
approximation does not suffer from subtractive cancellation
errors. Therefore, the choice of step length $h$ does not involve the
usual trade-off between truncation and cancellation errors.  As long
as underflow is avoided $h$ can be chosen arbitrarily small.  Since
the approximation is second order accurate, this means that $h$ 
typically can be chosen so that the overall accuracy is determined by the
accuracy of the function evaluation.
The observations in this section are based on
two different generalizations of the complex step approximation
above.

The first generalization is the extension of the approximation to the
evaluation of first order Fr\'{e}chet derivatives of matrix functions.
Al-Mohy and Higham~\cite{Al-MohyHigham_derivative_2009} showed that
for a matrix function $f(A)$, real-valued for real input, we have that
the Fr\'{e}chet derivative
\begin{equation} \label{eq:complex_step_frechet}
  L_f(A,E) = \textrm{Im}(f(A+ihE))/h + O(h^2)
\end{equation}
if $A,E$ are real.

The second generalization is the extension of the scalar complex step
approximation to higher order derivatives.  To this end, Lantoine et
al.\ made use of multicomplex numbers~\cite{complex_step_scalar_higher}.
The $j$-complex numbers can be defined recursively, similarly to
regular complex (1-complex) numbers but with $(j-1)$-complex numbers
used in place of the real coefficients, i.e.\ 
$\mathbb{C}^j = \{z_1+z_2i_j : z_1,z_2\in \mathbb{C}^{j-1}\}$, $j>0$
and $\mathbb{C}^0 = \mathbb{R}$. Here, $i_1,\dots, i_j$ are imaginary
units with the property $i_1^2 =\dots=i_j^2 = -1$ and $\mathbb{C}^1$
is the regular complex numbers.  We define the imaginary function
$\operatorname{Im}_{i}(z)$ of a $j$-complex number $z$ as the
$(j-1)$-complex coefficient of the imaginary unit $i$. For example,
$\operatorname{Im}_{i_1}(a + i_1 b + i_2 c + i_1i_2 d) = b + i_2 d$.
Lantoine et al.\ propose complex step approximations for higher order
partial derivatives of real functions of several variables. For
simplicity, we write here the result for a function of one variable. The
$j$th order derivative may be approximated by
\begin{equation}\label{eq:complex_step_higher}
  f^{(j)}(x_0) = \underset{i_1}{\operatorname{Im}}(\dots
  \underset{i_j}{\operatorname{Im}} (f(x_0+i_1h+\dots +
  i_jh))\dots)/h^j + O(h^2).
\end{equation}

To see the relation to the development in the previous sections and to
generalize the complex step approximations above to higher order
derivatives of matrix functions with complex input, we introduce a
block matrix representation of complex and multicomplex matrices.
A $j$-complex matrix $A+i_jB: A,B\in \mathbb{C}_{n\times n}^{j-1}$ can
be represented as a $(j-1)$-complex $2n\times 2n$ matrix
\begin{align}
  \begin{bmatrix}
    A & B \\ -B & A
  \end{bmatrix}
\end{align}
and this representation may be repeated recursively until a real
matrix is obtained with dimension $2^jn \times 2^jn$.
For example, such a block matrix representation of the $j$-complex
matrix $A_0+i_1hE_1+\dots+i_jhE_j$ is given by the recursion 
\begin{equation} \label{eq:j-complex_pure}
  \begin{aligned} 
    X_0 & = A_0, \\
    X_i & = 
    \begin{bmatrix}
      X_{i-1}                & I_{2^{i-1}} \otimes hE_i  \\
      -I_{2^{i-1}} \otimes hE_i & X_{i-1}
    \end{bmatrix}, \quad i = 1,\dots,j.
  \end{aligned}
\end{equation}
We assume in the following that $f(A)$ is analytic and real-valued for real $A$.
We show in Appendix~\ref{app:complex} that
\begin{equation}\label{eq:CS_block_Frechet_first_order_complex}
  L_f(A_0,E_1) = \frac{1}{h}\left[f(X_1)
    \right]_{1,2}
  + O(h^2)
\end{equation}
and
\begin{equation}\label{eq:CS_block_Frechet_second_order_complex}
  L^{(2)}_f(A_0,E_1,E_2) = \frac{1}{h^2}\left[f(X_2)
    \right]_{1,4}
  + O(h^2)
\end{equation}
hold with complex $A_0,E_1,E_2$.  Note that the coupled imaginary part
retrieved by $\underset{i_1}{\operatorname{Im}}(\dots
\underset{i_j}{\operatorname{Im}}(Z)\dots)$ of a $j$-complex matrix
$Z$ is given by the upper right $(1,2^j)$ submatrix of the block
representation. Therefore, the formulas
\eqref{eq:CS_block_Frechet_first_order_complex} and
\eqref{eq:CS_block_Frechet_second_order_complex} may be interpreted as
multicomplex step approximations and may be written as
\begin{equation}\label{eq:CS_Frechet_first_order_complex}
  L_f(A_0,E_1) = \underset{i_1}{\operatorname{Im}}[f(A_0+i_1hE_1)]/h+O(h^2)
\end{equation}
and
\begin{equation}\label{eq:CS_Frechet_second_order_complex}
  L^{2}_f(A_0,E_1,E_2) = \underset{i_1}{\operatorname{Im}}[\underset{i_2}{\operatorname{Im}}[f(A_0+i_1hE_1+i_2hE_2)]]/h^2+O(h^2)
\end{equation}
with $i_1$ and $i_2$ distinct from the regular imaginary unit $i$.

By Theorem~\ref{thm:derivative_as_linear_combination_of_frechet}, 
\eqref{eq:CS_block_Frechet_first_order_complex}, and
\eqref{eq:CS_block_Frechet_second_order_complex} we
have that a second order general partial derivative $\partial^\alpha
f(A(x))|_{x=\bar{x}}$, $\alpha = \beta+\gamma$, $|\beta|=|\gamma|=1$
is given by a linear combination 
$\partial^\alpha f(A(x))|_{x=\bar{x}} =
\frac{1}{h^2}[f(X_1)]_{1,4} +\frac{1}{h}[f(X_2)]_{1,2} +
O(h^2)$ where
\begin{align}
  X_1 = 
  \begin{bmatrix}
    A            & hA^{(\beta)}   & hA^{(\gamma)} & 0           \\
    -hA^{(\beta)}  & A            & 0           & hA^{(\gamma)} \\
    -hA^{(\gamma)} & 0            & A           & hA^{(\beta)}  \\
     0           & -hA^{(\gamma)} & -hA^{(\beta)} & A
  \end{bmatrix},
  \quad
  X_2 = 
  \begin{bmatrix}
    A            & hA^{(\alpha)} \\
    -hA^{(\alpha)} & A               
  \end{bmatrix}. 
\end{align}

The derivative may also be computed directly without going via the
Fr{\'e}chet derivatives. We show in Appendix~\ref{app:complex} that 
\begin{equation}\label{eq:second_order_complex_step}
  \partial^\alpha f(A(x))|_{x=\bar{x}} =
  \frac{1}{h^2}[f(X)]_{1,4} + O(h^2).
\end{equation}
where 
\begin{align}\label{eq:X_complex_step}
  X = 
  \begin{bmatrix}
    A              & hA^{(\beta)}   & hA^{(\gamma)}    & h^2A^{(\alpha)} \\
    -hA^{(\beta)}     & A            & -h^2A^{(\alpha)} & hA^{(\gamma)}   \\
    -hA^{(\gamma)}   & -h^2A^{(\alpha)} & A             & hA^{(\beta)}    \\
     h^2A^{(\alpha)} & -hA^{(\gamma)}  & -hA^{(\beta)}    & A
  \end{bmatrix}.
\end{align}  

The block matrix representation makes the relation to the block upper
triangular form introduced in Section~\ref{sec:existence} 
evident. Theorem~\ref{thm:exist_higher_deriv} tells us 
that for example
\begin{equation}\label{eq:second_order_block_upper}
  \partial^\alpha f(A(x))|_{x=\bar{x}} = [f(X)]_{1,4}
\end{equation}
with
\begin{align}\label{eq:X_upper_triangular}
  X = 
  \begin{bmatrix}
    A & A^{(\beta)} & A^{(\gamma)} & A^{(\alpha)} \\
    0 & A         & 0          & A^{(\gamma)} \\
    0 & 0         & A          & A^{(\beta)}  \\
    0 & 0         & 0          & A
  \end{bmatrix}
\end{align}  
which can be compared with \eqref{eq:X_complex_step}.  The essential
difference lies in the nonzero lower left block in the recursive
construction of the block matrix.  For example, the representation in
\eqref{eq:j-complex_pure} can be compared to the corresponding
recursive definition in~\eqref{eq:recursive_frechet}.  Through the
lower left block higher order terms in $h$ may enter the calculation
and contaminate the result, leading to the $O(h^2)$ error of the complex
step approximation.

For higher order Fr{\'e}chet derivatives, hybrid methods have been
proposed where the complex step approximation is used only in one of
the variables of the Fr{\'e}chet
derivative~\cite{complex_step_higher_frechet_2021,
  werner_arxiv_2022}. In the block formalism developed here, block
forms for such methods are given by the use of the complex step
approximation only in the last level of the recursive construction of
$X$.
Such a hybrid approach may in principle also be used for the general partial
derivative and the block form for such a method would for the second
order case be given by 
\begin{align}\label{eq:X_hybrid}
  X = 
  \begin{bmatrix}
    A           &  A^{(\beta)}    &  hA^{(\gamma)} & hA^{(\alpha)} \\
    0           &  A            &  0           & hA^{(\gamma)} \\
   -hA^{(\gamma)} & -hA^{(\alpha)}  &  A           &  A^{(\beta)}  \\
    0           & -hA^{(\gamma)}  &  0           & A
  \end{bmatrix}
\end{align}  
and
\begin{equation}\label{eq:second_order_hybrid}
  \partial^\alpha f(A(x))|_{x=\bar{x}} = \frac{1}{h}[f(X)]_{1,4} + O(h^2).
\end{equation}

In all three block matrix forms \eqref{eq:X_complex_step},
\eqref{eq:X_upper_triangular}, and \eqref{eq:X_hybrid}, a second order
Fr{\'e}chet derivative $L_f^{(2)}(A,E_1,E_2)$ can be represented by
replacing $A^{(\beta)}$, $A^{(\gamma)}$, and $A^{(\alpha)}$ with
$E_1$, $E_2$, and 0, respectively.

\section{Numerical experiments}
We perform a few numerical experiments with the purpose of
highlighting the relationships between the methods discussed in the
previous section.
We use an example from~\cite{Al-MohyHigham_derivative_2009}, where the
complex step approximation and the algorithm $\cos(A) =
(e^{iA}+e^{-iA})/2$ is used to evaluate $L_{\textrm{cos}}(A,E)$. Using
this algorithm in \eqref{eq:complex_step_frechet} gives the
approximation
\begin{equation} \label{eq:cos_der_complex_step_regular}
  L_{\textrm{cos}}(A,E) \approx \textrm{Im}((e^{iA-hE}+e^{-iA+hE}))/(2h).
\end{equation}
We refer to this as the regular complex step approximation.  
If we instead use the block form of
\eqref{eq:CS_block_Frechet_first_order_complex}
we get
\begin{equation} \label{eq:cos_der_complex_step_block_matrix}
  L_{\textrm{cos}}(A,E) \approx
  \frac{1}{2h}\left[
    \exp\left(i
    \begin{bmatrix}
      A & hE \\ -hE & A
    \end{bmatrix}
    \right)
    +
    \exp\left(-i
    \begin{bmatrix}
      A & hE \\ -hE & A
    \end{bmatrix}
    \right)
    \right]_{1,2}
\end{equation}
which we refer to as the block matrix complex step approximation. We
also compare with the standard central finite difference
\begin{equation}\label{eq:cos_der_finite_difference}
  L_{\textrm{cos}}(A,E) \approx (\cos(A+hE)-\cos(A-hE))/(2h).
\end{equation}
Finally, we compare with the block upper triangular form provided by
Theorem~\ref{thm:exist_frechet}, which reduces to Mathias' result in
this case, that is
\begin{equation}\label{eq:cos_der_block_upper}
  L_{\textrm{cos}}(A,E) \approx
  \frac{1}{2h}\left[
    \exp\left(i
    \begin{bmatrix}
      A & hE \\ 0 & A
    \end{bmatrix}
    \right)
    +
    \exp\left(-i
    \begin{bmatrix}
      A & hE \\ 0 & A
    \end{bmatrix}
    \right)
    \right]_{1,2}.
\end{equation}
We include here a scaling of $E$ to see if this affects the accuracy
in this case, as discussed in~\cite{Al-MohyHigham_derivative_2009,
  almohy_exp_frechet}.  Note that such scaling may be used in any
algorithm for the Fr{\'e}chet derivative since $L_{f}(A,E)$ is linear
in $E$.  Throughout this section, we use {\tt expm} in Matlab R2022a to
evaluate all matrix exponentials~\cite{higham_expm_2005,
  al_mohy_higham_expm_2010}.

We use first scalar input $A=E=1$ as in
\cite{Al-MohyHigham_derivative_2009}, with results for the four
alternatives above in Panel~(a) of Figure~\ref{fig:first_order}. In
this case, the regular complex step approximation suffers from
subtractive cancellation errors, similarly to the finite difference
approximation. This is due to the algorithm using complex arithmetic,
as discussed in \cite{Al-MohyHigham_derivative_2009}. This problem
disappears when the block matrix formulation is used. The block form
can be seen as using a separate imaginary unit in the complex step.
In exact arithmetics, the two approaches
should give the same result, but numerically the block matrix
formulation is preferred.  We repeat the experiment in Panel~(b) with
complex scalar input with the real and imaginary parts of $A$ and $E$
each drawn from a uniform distribution in $[0,\, 1]$.  As expected,
the regular complex step approximation does not work in this case
since it is only able to produce real output.  However, with the
distinction between the imaginary units provided by the block form,
the complex step approximation works also with complex input and
output.

\begin{figure}
  \begin{center}
    \subfigure[Real\label{fig:cos_der_real}]{
      \includegraphics[width=0.45\textwidth]{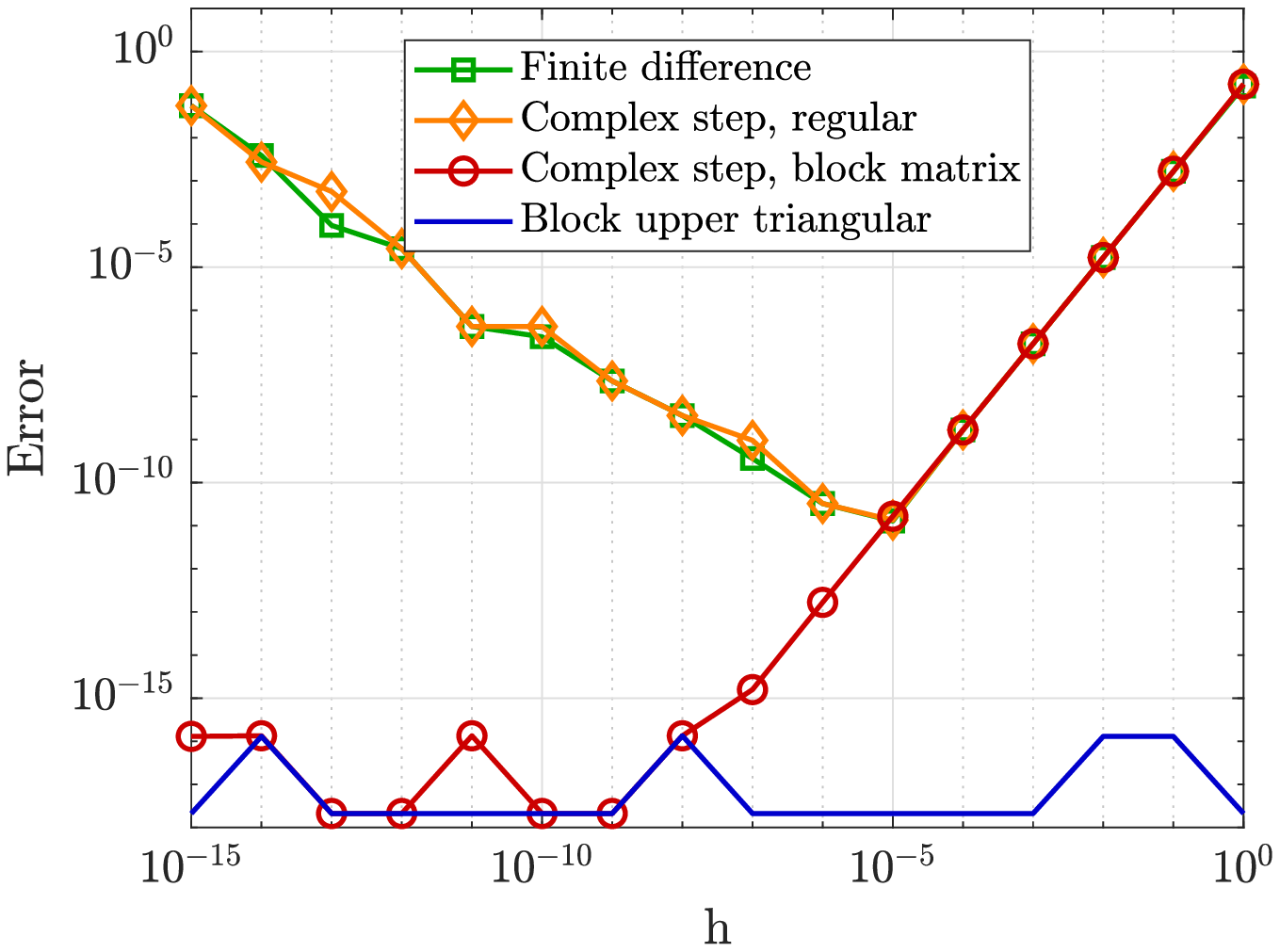}
    }
    \subfigure[Complex\label{fig:cos_der_complex}]{
      \includegraphics[width=0.45\textwidth]{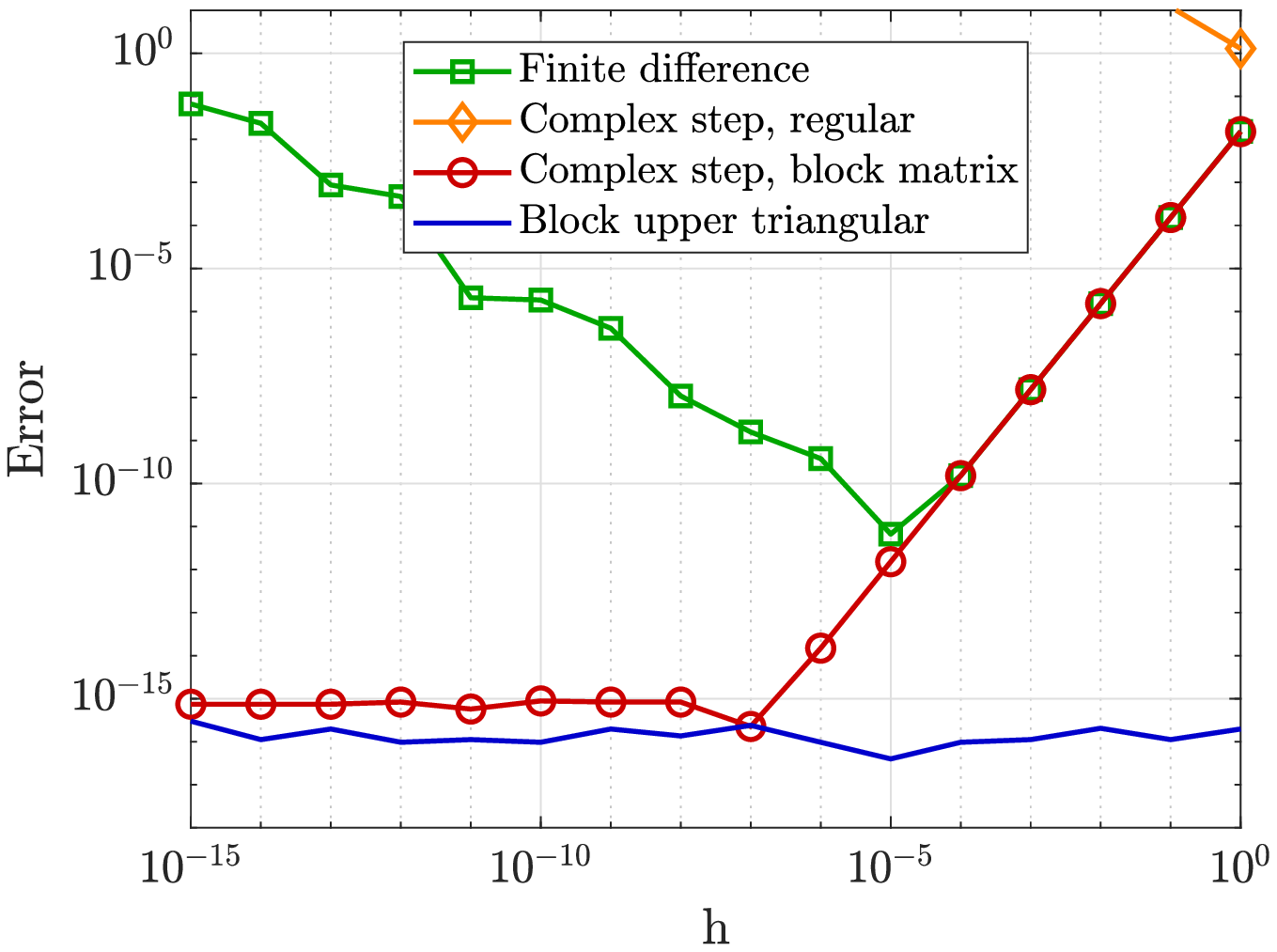}
    }
  \end{center}
  \caption{Relative errors in computations of $L_{\textrm{cos}}(A,E)$
    using a standard central finite difference
    formula~\eqref{eq:cos_der_finite_difference}, the regular
    \eqref{eq:cos_der_complex_step_regular} and block matrix
    \eqref{eq:cos_der_complex_step_block_matrix} complex step
    approximations, and the block upper triangular form
    \eqref{eq:cos_der_block_upper}.  The error is given as a function
    of step length $h$. Panel~\subref{fig:cos_der_real}: errors with
    real scalar input $A=E=1$. Panel~\subref{fig:cos_der_complex}: errors with
    random complex scalar input.
    \label{fig:first_order} }
\end{figure}  

We consider also a general partial derivative
\begin{equation}\label{eq:partial_second_derivative_cos}
  \frac{\partial^2}{\partial x \partial y} \cos(A(x,y))|_{x=y=0}.
\end{equation}
We let the derivatives $A(0,0)$, $A'_x(0,0)$, $A'_y(0,0)$, and
$A''_{xy}(0,0)$ needed to characterize $A(x,y)$ be random complex $3
\times 3$ matrices with elements whose real and imaginary parts are
each drawn from a uniform distribution in $[-0.5,\, 0.5]$.

We compare the second order complex step approximation
\eqref{eq:second_order_complex_step}, the second order block upper
triangular form \eqref{eq:second_order_block_upper}, the hybrid
approach \eqref{eq:second_order_hybrid} and the finite difference
approximation
\begin{multline}\label{eq:cos_der_second_order_finite_difference}
  (\cos(A+hA'_x+hA'_y+h^2A''_{xy})
  -\cos(A+hA'_x-hA'_y-h^2A''_{xy}) \\
  -\cos(A-hA'_x+hA'_y-h^2A''_{xy})
  +\cos(A-hA'_x-hA'_y+h^2A''_{xy}))/(4h^2)
\end{multline}
where the matrix cosine is evaluated using diagonalization.
Figure~\ref{fig:matrix_test} shows the relative error in the spectral
norm approximated by
$\|\widetilde{X}-X_\textrm{ref}\|_2/\|X_\textrm{ref}\|_2$ where
$\widetilde{X}$ is the approximation and $X_\textrm{ref}$ is an
accurate reference solution computed using the finite difference
approximation evaluated in high precision using Matlab's symbolic
toolbox with a step size of $h=10^{-30}$.
We have also tried the approach to compute the partial derivative as a
sum of Fr{\'e}chet derivatives, which in each case gives errors at the
same level as the combined approach whose errors are shown in the
figure.

\begin{figure}
  \begin{center}
    \includegraphics[width=0.45\textwidth]{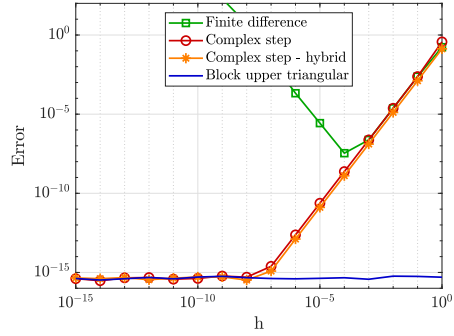}
  \end{center}
  \caption{Relative errors in computations of the partial derivative
    in~\eqref{eq:partial_second_derivative_cos} using the finite
    difference
    approximation~\eqref{eq:cos_der_second_order_finite_difference},
    the complex step approximation given
    by~\eqref{eq:second_order_complex_step}, the block upper
    triangular form~\eqref{eq:second_order_block_upper}, and the
    hybrid approach of~\eqref{eq:second_order_hybrid}.  The
    computations were carried out with $A(x,y)$
    in~\eqref{eq:partial_second_derivative_cos} being a $3\times 3$
    complex matrix with complex valued derivatives at $x=y=0$. See the
    text for details.
    \label{fig:matrix_test}}
\end{figure}  

\section{Discussion}
In this article, we develop a block upper triangular formulation for
higher order derivatives of matrix functions. This form was shown to
generalize classical formulas of quantum perturbation theory. More
recent developments within quantum perturbation theory can also be
derived from
the block upper triangular form, such as
methods based on recursive Fermi operator expansions~\cite{weber_2005,
  JFinkelstein22}. We expect the formulation to be useful for further
development and analysis of methods also in this class.

The block upper triangular structure is preserved under matrix
function evaluation and typically the repeated blocks below the first
block row do not need to be involved in the computation. In
\eqref{eq:daleckii_krein_generalized} a diagonalization of
$A(\bar{x})$ provides a Schur form for the whole block matrix, and
only the desired matrix blocks of the result need to be computed. In
iterative methods, such as recursive Fermi operator
expansions~\cite{JFinkelstein22}, only the first block row needs to be
computed in each iteration. This has been discussed previously for
Newton iterations and first order Fr{\'e}chet
derivatives~\cite{Mathias_derivative_1996, almohy_exp_frechet}.

We introduced a block matrix formulation of first and higher order complex
step approximations and show how they are related to the block upper
triangular form. The essential difference is that the complex step
approximations include additional terms in the block matrix.
Those terms are responsible for the truncation errors in the complex
step approximation. Often, the additional terms also mean that more
computations are required. The main advantage with the complex step
approach is that one may use an existing complex number
representation.  In the case of higher derivatives, a generic complex
type may be used to recursively construct multicomplex types.
Such types can also be used to handle complex input.

The relationship between the block matrix forms is analogous to that
of complex step approximations and automatic differentiation,
discussed previously for example in~\cite{complex_step_auto_diff}. 
Automatic differentiation makes use of dual numbers $a+\varepsilon b$,
where $a$ and $b$ are real numbers and $\varepsilon$ satisfies
$\varepsilon^2 = 0$. A dual number may be represented by a matrix
$\begin{bmatrix}a & b \\ 0 & a\end{bmatrix}$, which may be compared to
  the block upper triangular matrix formulation considered in the
  present work.  Similarly, a complex scalar number $a+ib$ may be
  represented in matrix form as $\begin{bmatrix}a & b \\ -b &
    a\end{bmatrix}$.
  
\appendix
\section{Relation to perturbation theory}\label{app:perturbation_theory}
We show here that several theories of perturbations of Hermitian
matrices can be seen as special cases or derived from the generalized
Dalecki{\u\i}-Kre{\u\i}n formulas. 

We consider derivatives on the form 
\begin{equation}
  P^{(\alpha)} =
  \left.\partial^\alpha f(H(\varepsilon))\right|_{\varepsilon=0} 
\end{equation}
where $H(\varepsilon)$ is Hermitian and $f(x)$ is a step function
defined for scalars as
\begin{equation}
  f(x) =
  \begin{cases}
    1 & \textrm{if } x < \mu, \\
    0 & \textrm{otherwise.}
  \end{cases}
\end{equation}
As previously we use the shorthand notation $H^{(\alpha)} =
\partial^\alpha H(\varepsilon)|_{\varepsilon = 0}$, and $H^{(0)} =
H(0)$.
We assume that $H^{(0)}$ has no eigenvalues in a neighborhood of $\mu$
so that $f(x)$ and its derivatives are defined on the spectrum of
$H(\varepsilon)$ for sufficiently small values of $\varepsilon$.
Let $\lambda_1 \leq \lambda_2 \leq \dots \leq \lambda_n$ be the
ordered eigenvalues of $H^{(0)}$.
We disregard trivial cases when $\mu$ is not within the
closure of the eigenspectrum. Then, there is an $n_e$ such that 
$1 \leq n_e < n$ and $\lambda_{n_e} < \mu < \lambda_{n_e+1}$. 
In the following, we let $U=Q^*H^{(0)}Q$ be an eigendecomposition of
$H^{(0)}$ such that $U=\diag([\lambda_1, \dots, \lambda_n])$.

In electronic structure theory, the invariant subspace of $H^{(0)}$ spanned by eigenvectors corresponding to eigenvalues smaller than $\mu$ is sometimes referred to as the occupied subspace. $P^{(0)} = f(H^{(0)})$, known as the density matrix, is the matrix for orthogonal projection onto the occupied subspace. Similarly, $I-P^{(0)}$ is the projection onto the so-called virtual subspace spanned by the remaining eigenvectors.

The divided differences
$f[\lambda_i,\lambda_j], \ i,j = 1,\dots, n$ are nonzero for pairs of
eigenvalues on opposite sides of $\mu$:
\begin{equation}
  f[\lambda_i,\lambda_j] =
  \begin{cases}
    -\frac{1}{|\lambda_i-\lambda_j|} & \textrm{if } (\lambda_i-\mu)(\lambda_j-\mu) < 0, \\
    0 & \textrm{otherwise.}
  \end{cases} 
\end{equation}

We define the corresponding $n \times n$ Loewner matrix with entries $D_{ij} =
f[\lambda_i,\lambda_j]$ and let $V^{(\alpha)} = Q^*P^{(\alpha)}Q$.
In the first order case ($|\alpha| = 1$) we get from
\eqref{eq:daleckii_krein} immediately that
\begin{equation}\label{eq:first_order_heavi}
  V^{(\alpha)} = Q^*(\partial^\alpha f(H(\varepsilon))|_{\varepsilon=0})Q = D \circ U^{(\alpha)} 
\end{equation}
which, due to the structure of $D$, is a block $2\times 2$ matrix with zero diagonal blocks.
The off-diagonal blocks describe transitions between the occupied and
virtual subspaces of $H^{(0)}$.
This is the first order correction of the density matrix
perturbation theory by McWeeny and Diercksen~\cite{mcweeny_diercksen_1966}.

For higher orders we need the second order divided differences
\begin{equation}\label{eq:heavi_divided_difference_second_order}
  f[\lambda_i,\lambda_j, \lambda_k] =
  \begin{cases}
    0 & \textrm{if } \lambda_i < \mu, \lambda_j < \mu, \lambda_k < \mu, \\
    0 & 
    \textrm{if }
    \lambda_i > \mu, \lambda_j > \mu, \lambda_k > \mu, \\
    -\frac{1}{|\lambda_k-\lambda_j||\lambda_i-\lambda_k|} &
    \textrm{if } \lambda_i < \mu, \lambda_j < \mu, \lambda_k > \mu, \\
    \frac{1}{|\lambda_k-\lambda_j||\lambda_i-\lambda_k|} &
    \textrm{if } \lambda_i > \mu, \lambda_j > \mu, \lambda_k < \mu.
  \end{cases}
\end{equation}
The other cases are given by permutations of the arguments of
$f[\lambda_i,\lambda_j, \lambda_k]$ which is invariant to their order.
For example, the case with $\lambda_i<\mu$, $\lambda_j>\mu$,
$\lambda_k<\mu$ is covered by the third case of
\eqref{eq:heavi_divided_difference_second_order} since
$f[\lambda_i,\lambda_j, \lambda_k] = f[\lambda_i,\lambda_k,
  \lambda_j]$.

The second order formula ($|\alpha| = 2$) is a specialization of
\eqref{eq:second_order_explicit} using the second order divided
differences in \eqref{eq:heavi_divided_difference_second_order} and
the formula for first order \eqref{eq:first_order_heavi}.
As an example, we look at one of the terms of
\eqref{eq:second_order_explicit} for the case when $i \leq n_e$ and
$j>n_e$:
\begin{multline} 
  \sum_{k=1}^n U_{i,k}^{(\beta)}U_{k,j}^{(\gamma)} f[\lambda_i,\lambda_k,\lambda_j] = \\
  -\frac{1}{|\lambda_i-\lambda_j|}\sum_{k=1}^{n_e} U_{i,k}^{(\beta)}\underbrace{U_{k,j}^{(\gamma)}\frac{1}{|\lambda_j-\lambda_k|}}_{-V_{k,j}^{(\gamma)}} +
  \frac{1}{|\lambda_i-\lambda_j|}\sum_{k=n_e+1}^{n} \underbrace{U_{i,k}^{(\beta)} \frac{1}{|\lambda_i-\lambda_k|}}_{-V_{i,k}^{(\beta)}} U_{k,j}^{(\gamma)} 
\end{multline}
which can be written in matrix form as
\begin{equation}
  D\circ \left(
  \begin{bmatrix}
    I & 0 \\ 0 & 0
  \end{bmatrix}
  V^{(\beta)}
  U^{(\gamma)}
  \begin{bmatrix}
    0 & 0 \\ 0 & I
  \end{bmatrix}  
  - 
  \begin{bmatrix}
    I & 0 \\ 0 & 0
  \end{bmatrix}
  U^{(\beta)}
  V^{(\gamma)}
  \begin{bmatrix}
    0 & 0 \\ 0 & I
  \end{bmatrix}  
  \right).
\end{equation}
Here, the submatrices of the block $2\times 2$ matrices have
dimensions conforming to the division of the eigenspectrum in its
occupied and virtual parts. Following the same procedure for the other
terms we get that
\begin{multline}\label{eq:second_order_heavi}
  V^{(\alpha)} = Q^*(\partial^\alpha f(H(\varepsilon))|_{\varepsilon=0})Q = \\
  \begin{aligned}[b]
    & D \circ U^{(\alpha)} +
    \\
    &
    D\circ \left(
    \begin{bmatrix}
      I & 0 \\ 0 & 0
    \end{bmatrix}
    V^{(\beta)}
    U^{(\gamma)}
    \begin{bmatrix}
      0 & 0 \\ 0 & I
    \end{bmatrix}  
    - 
    \begin{bmatrix}
      I & 0 \\ 0 & 0
    \end{bmatrix}
    U^{(\beta)}
    V^{(\gamma)}
    \begin{bmatrix}
      0 & 0 \\ 0 & I
    \end{bmatrix}  
    \right) +
    \\
    &
    D\circ \left(
    \begin{bmatrix}
      0 & 0 \\ 0 & I
    \end{bmatrix}
    U^{(\beta)}
    V^{(\gamma)}
    \begin{bmatrix}
      I & 0 \\ 0 & 0
    \end{bmatrix}  
    - 
    \begin{bmatrix}
      0 & 0 \\ 0 & I
    \end{bmatrix}
    V^{(\beta)}
    U^{(\gamma)}
    \begin{bmatrix}
      I & 0 \\ 0 & 0
    \end{bmatrix}  
    \right) +
    \\
    &
    \begin{bmatrix}
      0 & 0 \\ 0 & I
    \end{bmatrix}
    V^{(\beta)}
    V^{(\gamma)}
    \begin{bmatrix}
      0 & 0 \\ 0 & I
    \end{bmatrix}
    -
    \begin{bmatrix}
      I & 0 \\ 0 & 0
    \end{bmatrix}
    V^{(\beta)}
    V^{(\gamma)}
    \begin{bmatrix}
      I & 0 \\ 0 & 0
    \end{bmatrix} +
    \\
    &
    D\circ \left(
    \begin{bmatrix}
      I & 0 \\ 0 & 0
    \end{bmatrix}
    V^{(\gamma)}
    U^{(\beta)}
    \begin{bmatrix}
      0 & 0 \\ 0 & I
    \end{bmatrix}  
    - 
    \begin{bmatrix}
      I & 0 \\ 0 & 0
    \end{bmatrix}
    U^{(\gamma)}
    V^{(\beta)}
    \begin{bmatrix}
      0 & 0 \\ 0 & I
    \end{bmatrix}  
    \right) +
    \\
    &
    D\circ \left(
    \begin{bmatrix}
      0 & 0 \\ 0 & I
    \end{bmatrix}
    U^{(\gamma)}
    V^{(\beta)}
    \begin{bmatrix}
      I & 0 \\ 0 & 0
    \end{bmatrix}  
    - 
    \begin{bmatrix}
      0 & 0 \\ 0 & I
    \end{bmatrix}
    V^{(\gamma)}
    U^{(\beta)}
    \begin{bmatrix}
      I & 0 \\ 0 & 0
    \end{bmatrix}  
    \right) +
    \\
    &
    \begin{bmatrix}
      0 & 0 \\ 0 & I
    \end{bmatrix}
    V^{(\gamma)}
    V^{(\beta)}
    \begin{bmatrix}
      0 & 0 \\ 0 & I
    \end{bmatrix}
    -
    \begin{bmatrix}
      I & 0 \\ 0 & 0
    \end{bmatrix}
    V^{(\gamma)}
    V^{(\beta)}
    \begin{bmatrix}
      I & 0 \\ 0 & 0
    \end{bmatrix}
  \end{aligned}
\end{multline}


We split $Q = [Q_{\textrm{occ}}\ Q_{\textrm{vir}}]$ into rectangular
matrices $Q_{\textrm{occ}}$ and $Q_{\textrm{vir}}$ containing the
eigenvectors of $H^{(0)}$ that span the occupied and virtual
subspaces, respectively.
Using also the relations
$U^{(\alpha)} = Q^*H^{(\alpha)}Q$ and $V^{(\alpha)} =
Q^*P^{(\alpha)}Q$, the second order derivative in
\eqref{eq:second_order_heavi} can be written as
\begin{multline}
  Q^*(\partial^\alpha f(H(\varepsilon))|_{\varepsilon=0})Q = \\
  D \circ \left(
  \begin{bmatrix}
    0 \qquad Q_{\textrm{occ}}^*(H^{(\alpha)}+
    P^{(\beta)}H^{(\gamma)}-H^{(\beta)}P^{(\gamma)}+P^{(\gamma)}H^{(\beta)}-H^{(\gamma)}P^{(\beta)} ) Q_{\textrm{vir}} \\
    Q_{\textrm{vir}}^*(H^{(\alpha)}+
    H^{(\beta)}P^{(\gamma)}-P^{(\beta)}H^{(\gamma)}+H^{(\gamma)}P^{(\beta)}-P^{(\gamma)}H^{(\beta)} ) Q_{\textrm{occ}} \qquad 0
  \end{bmatrix}
  \right) + \\
  \begin{bmatrix}
    -Q_{\textrm{occ}}^*(P^{(\beta)}P^{(\gamma)}+P^{(\gamma)}P^{(\beta)})Q_{\textrm{occ}} & 0 \\
    0 & Q_{\textrm{vir}}^*(P^{(\beta)}P^{(\gamma)}+P^{(\gamma)}P^{(\beta)})Q_{\textrm{vir}}
  \end{bmatrix}.
\end{multline}
For non-mixed derivatives ($\beta = \gamma$) this is basically the
second order correction of the density matrix perturbation theory by
McWeeny and Diercksen~\cite{mcweeny_diercksen_1966}.

The formulas in \eqref{eq:first_order_heavi} and
\eqref{eq:second_order_heavi} can also be used to derive classical formulas for
perturbed eigenvectors of $H(\varepsilon)$.  We consider
a first order perturbation $H(\varepsilon) = H^{(0)}+\varepsilon H^{(1)}$ and let
$n_e = 1$ and $Q=[q_1\, q_2 \dots q_n]$. The ground state
eigenvector of $H(\varepsilon)$ can be written $q(\varepsilon) =
P(\varepsilon) x$ where $x$ is any vector non-orthogonal to the
$\varepsilon$-dependent occupied subspace.
We note that the finite gap around $\mu$ implies that the ground state
eigenvalue is distinct when $n_e=1$.
Therefore, the ground state eigenvector is continuous at $\varepsilon=0$ and 
we may for sufficiently small values of $\varepsilon$ choose $x = q_1$.
We use \eqref{eq:first_order_heavi} for the first order derivative of
$P(\varepsilon)$ and get the familiar first order correction of the eigenvector
\begin{align}
  q^{(1)} & = \partial^{(1)} q(\varepsilon)|_{\varepsilon = 0} 
   = \partial^{(1)} P(\varepsilon)q_1|_{\varepsilon = 0} \\
  & = Q(D\circ(Q^*H^{(1)}Q))Q^*q_1 \\
  & = -\sum_{i=2}^n \frac{q_i^*H^{(1)}q_1}{|\lambda_1-\lambda_i|}q_i.
\end{align}

We use \eqref{eq:second_order_heavi} for the second order derivative
of $P(\varepsilon)$ and use \eqref{eq:first_order_heavi} for the first
order terms in \eqref{eq:second_order_heavi}.  Since
$\begin{bmatrix}0&0\\0&I\end{bmatrix} Q^*q_1$ is zero, half of the
  terms vanish immediately and due to the simple case with a non-mixed
  derivative the remaining terms are repeated in pairs. We get the familiar
  second order correction of the eigenvector
\begin{align}
  q^{(2)} = & \partial^{(2)} q(\varepsilon)|_{\varepsilon = 0} 
  =  \partial^{(2)} P(\varepsilon)q_1|_{\varepsilon = 0} \\
  = &
  \begin{aligned}[t]
    Q\bigg(& 2D\circ \bigg(
    \begin{bmatrix}
      0 & 0 \\ 0 & I
    \end{bmatrix}
    Q^*H^{(1)}Q (D\circ (Q^*H^{(1)}Q))
    \begin{bmatrix}
      1 & 0 \\ 0 & 0
    \end{bmatrix}
    \bigg)-
    \\
    &
    2 D\circ \bigg(
    \begin{bmatrix}
      0 & 0 \\ 0 & I
    \end{bmatrix}
    (D\circ (Q^*H^{(1)}Q)) Q^*H^{(1)}Q 
    \begin{bmatrix}
      1 & 0 \\ 0 & 0
    \end{bmatrix}
    \bigg)-
    \\
    &
    2 D\circ \bigg(
    \begin{bmatrix}
      1 & 0 \\ 0 & 0
    \end{bmatrix}
    (D\circ (Q^*H^{(1)}Q))^2
    \begin{bmatrix}
      1 & 0 \\ 0 & 0
    \end{bmatrix}
    \bigg)   
    \bigg)Q^*q_1
    \end{aligned}
  \\
  = &
  2\sum_{j=2}^n \sum_{i=2}^n
  \frac{q_j^*H^{(1)}q_i q_i^*H^{(1)}q_1}{|\lambda_1-\lambda_i||\lambda_1-\lambda_j|}
   q_j-
  \\
  &
  2\sum_{j=2}^n \frac{q_j^*H^{(1)}q_1q_1^*H^{(1)}q_1}{(\lambda_1-\lambda_j)^2}q_j-
  \\
  &
  2\sum_{i=2}^n \frac{(q_1^*H^{(1)}q_i)^2}{(\lambda_1-\lambda_i)^2}q_1.
\end{align}

\section{Block forms of complex step approximations}\label{app:complex}

\begin{lemma}\label{lem:jordan_block_complex}
  Let $X = VJV^{-1}$ be a Jordan decomposition of $X$ and let
  $A=\mathrm{Re}\, X$ and $B=\mathrm{Im}\, X$. Then,
  \begin{align}
    \begin{bmatrix}
      A & B \\
      -B & A
    \end{bmatrix}
    = 
    \begin{bmatrix}
      V  & \overline{V} \\
      iV & \overline{iV}
    \end{bmatrix}
    \begin{bmatrix}
      J & 0 \\
      0 & \overline{J}
    \end{bmatrix}
    \begin{bmatrix}
      V  & \overline{V} \\
      iV & \overline{iV}
    \end{bmatrix}^{-1}
  \end{align}
  is a Jordan decomposition of
  $\begin{bmatrix}
    A & B \\
    -B & A
  \end{bmatrix}$
  where $\overline{V}$ is the elementwise complex conjugate of $V$.  
\end{lemma}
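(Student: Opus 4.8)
The plan is to reduce the claim to two verifications: that the stated similarity holds, and that the central factor is genuinely in Jordan form. Here I abbreviate $M = \begin{bmatrix} A & B \\ -B & A\end{bmatrix}$, $W = \begin{bmatrix} V & \overline V \\ iV & -i\overline V\end{bmatrix}$ (using $\overline{iV} = -i\overline V$), and $\tilde J = \begin{bmatrix} J & 0 \\ 0 & \overline J\end{bmatrix}$. Rather than invert $W$ explicitly, I would verify the equivalent identity $MW = W\tilde J$ and separately show that $W$ is invertible, after which $M = W\tilde J W^{-1}$ follows immediately.

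For the first identity I would expand $MW$ blockwise and substitute $A=\mathrm{Re}\,X$, $B=\mathrm{Im}\,X$, so that $X=A+iB$ and $\overline X = A-iB$. The left column of $MW$ works out to $\begin{bmatrix} (A+iB)V \\ i(A+iB)V\end{bmatrix} = \begin{bmatrix} XV \\ iXV\end{bmatrix}$, and since $XV=VJ$ this equals $\begin{bmatrix} VJ \\ iVJ\end{bmatrix}$, matching the left column of $W\tilde J$. For the right column, conjugating $XV=VJ$ gives $\overline X\,\overline V = \overline V\,\overline J$, and the same manipulation yields $\begin{bmatrix} (A-iB)\overline V \\ -i(A-iB)\overline V\end{bmatrix} = \begin{bmatrix} \overline V\,\overline J \\ -i\overline V\,\overline J\end{bmatrix}$, matching the right column of $W\tilde J$. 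Hence $MW = W\tilde J$. The attractive feature of this route is that it uses the full relation $XV=VJ$ at once, so no decomposition into individual Jordan chains is ever needed.

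To establish invertibility of $W$ I would use the factorization $W = \begin{bmatrix} I & I \\ iI & -iI\end{bmatrix}\begin{bmatrix} V & 0 \\ 0 & \overline V\end{bmatrix}$. The second factor is invertible because $V$ is (being part of a Jordan decomposition) and hence so is $\overline V$; the first factor is invertible with inverse $\tfrac12\begin{bmatrix} I & -iI \\ I & iI\end{bmatrix}$. Thus $W$ is invertible and $M = W\tilde J W^{-1}$.

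Finally I would note that $\tilde J$ is a Jordan matrix: $\overline J$ is obtained from $J$ by conjugating the diagonal eigenvalues while leaving the real superdiagonal of ones intact, so each Jordan block of $J$ with eigenvalue $\lambda$ becomes a Jordan block of the same size with eigenvalue $\overline\lambda$, and the block-diagonal arrangement $\mathrm{diag}(J,\overline J)$ is therefore itself in Jordan canonical form. The block computation is essentially routine; the only points requiring care are the bookkeeping of the conjugations (e.g.\ that $\overline{XV}=\overline V\,\overline J$) and the recognition that $\mathrm{diag}(J,\overline J)$ remains a valid Jordan form, which I expect to be the main conceptual rather than technical obstacle.
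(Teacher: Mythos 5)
Your proof is correct and follows essentially the same route as the paper: both verify the identity $MW = W\tilde{J}$ blockwise using $XV = VJ$ and its conjugate $\overline{X}\,\overline{V} = \overline{V}\,\overline{J}$, then establish invertibility of $W$ separately. The only divergence is the invertibility step, where the paper uses a Schur-complement argument (noting $\overline{iV} - iVV^{-1}\overline{V} = -2i\overline{V}$ is nonsingular) while you factor $W = \begin{bmatrix} I & I \\ iI & -iI \end{bmatrix}\begin{bmatrix} V & 0 \\ 0 & \overline{V} \end{bmatrix}$ with an explicit inverse of the first factor --- an equally valid variant --- and your closing remark that $\diag(J,\overline{J})$ is itself a genuine Jordan form (the superdiagonal ones being real, only eigenvalues are conjugated) makes explicit a point the paper leaves implicit.
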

\begin{proof}
  We have that
  \begin{align}
    \begin{bmatrix}
      A & B \\
      -B & A
    \end{bmatrix}
    \begin{bmatrix}
      V & \overline{V} \\
      iV & \overline{iV}
    \end{bmatrix}
    & =
    \begin{bmatrix}
      (A+iB)V & (A-iB)\overline{V} \\
      (A+iB)iV & (A-iB)\overline{iV}
    \end{bmatrix}
    =
    \begin{bmatrix}
      V  & \overline{V} \\
      iV & \overline{iV}
    \end{bmatrix}
    \begin{bmatrix}
      J & 0 \\
      0 & \overline{J}
    \end{bmatrix}
  \end{align}
  since the original Jordan decomposition of $X$ implies
  \begin{align}
    (A+iB)V & = VJ, \\
    (A-iB)\overline{V} & = \overline{V}\overline{J}.
  \end{align}
  The matrix $
    \begin{bmatrix}
      V  & \overline{V} \\
      iV & \overline{iV}
    \end{bmatrix}
    $
    is nonsingular which can be seen for example by noting that its Schur complement $\overline{iV}-iVV^{-1}\overline{V} = -2i\overline{V}$ is nonsingular~\cite{HornJohnson_matrix_analysis_2013}.
\end{proof}

\begin{theorem}\label{thm:matfun_complex}
  Let $X = VJV^{-1}$ be a Jordan decomposition of $X = A+iB$, where
  $A$ and $B$ are real matrices. Let $f$ be a function such that
  $\overline{f(J)} = f(\overline{J})$.
  Then,
  \begin{align}
    f
    \begin{pmatrix}
      A & B \\
      -B & A
    \end{pmatrix}
    & =
    \begin{bmatrix}
      \mathrm{Re}[f(X)]  & \mathrm{Im}[f(X)] \\
      -\mathrm{Im}[f(X)] & \mathrm{Re}[f(X)]
    \end{bmatrix}    
  \end{align}
\end{theorem}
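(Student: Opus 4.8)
The plan is to feed the Jordan decomposition supplied by Lemma~\ref{lem:jordan_block_complex} directly through $f$ and then simplify with block algebra. Writing $M = \begin{bmatrix} A & B \\ -B & A\end{bmatrix}$ and $W = \begin{bmatrix} V & \overline{V} \\ iV & \overline{iV}\end{bmatrix}$, Lemma~\ref{lem:jordan_block_complex} exhibits $\begin{bmatrix} J & 0 \\ 0 & \overline{J}\end{bmatrix}$ as a Jordan form of $M$. Since a primary matrix function is evaluated blockwise on the Jordan form, $f(M) = W\,\mathrm{diag}\bigl(f(J),\,f(\overline{J})\bigr)\,W^{-1}$. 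This is the one place the hypothesis enters: the assumption $\overline{f(J)} = f(\overline{J})$ lets me replace $f(\overline{J})$ by $\overline{f(J)}$, so the middle factor becomes $\mathrm{diag}\bigl(f(J),\,\overline{f(J)}\bigr)$.

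First I would factor $W$. Using $\overline{iV} = -i\overline{V}$, one has $W = P\,\mathrm{diag}(V,\overline{V})$ with $P = \begin{bmatrix} I & I \\ iI & -iI\end{bmatrix}$ and $P^{-1} = \tfrac{1}{2}\begin{bmatrix} I & -iI \\ I & iI\end{bmatrix}$. Conjugating $\mathrm{diag}\bigl(f(J),\,\overline{f(J)}\bigr)$ by $\mathrm{diag}(V,\overline{V})$ reconstitutes $f$ of the original matrix and of its conjugate, because $Vf(J)V^{-1} = f(X)$ and $\overline{V}\,\overline{f(J)}\,\overline{V}^{-1} = \overline{V f(J) V^{-1}} = \overline{f(X)}$. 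Hence $f(M) = P\,\mathrm{diag}\bigl(f(X),\,\overline{f(X)}\bigr)\,P^{-1}$.

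The remaining step is a short calculation. Writing $f(X) = R + iS$ with $R = \mathrm{Re}\,f(X)$ and $S = \mathrm{Im}\,f(X)$, so that $\overline{f(X)} = R - iS$, I would expand $P\,\mathrm{diag}(R+iS,\,R-iS)\,P^{-1}$ blockwise; the imaginary contributions cancel in the two diagonal blocks to leave $R$, while the off-diagonal blocks collect to $+S$ and $-S$, producing exactly $\begin{bmatrix} R & S \\ -S & R\end{bmatrix}$, the claimed form.

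The main obstacle — really the only nonmechanical point — is justifying the first paragraph: that $f$ is genuinely defined on the spectrum of $M$ and that pushing the Lemma's decomposition through $f$ is legitimate. Here one observes that $\mathrm{spec}(M) = \mathrm{spec}(X)\cup\overline{\mathrm{spec}(X)}$ and that the Jordan block sizes of $M$ are precisely those of $X$ together with their conjugates, so the differentiability that makes $f(X)$ meaningful is exactly what is also needed for $f(\overline{J})$; the conjugation-symmetry hypothesis $\overline{f(J)} = f(\overline{J})$ is what forces the two halves to be consistent. Once this is secured, the rest is routine block-matrix bookkeeping.
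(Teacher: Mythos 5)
Your proof is correct, and while it opens exactly as the paper's does, it finishes by a genuinely different route. Both arguments begin identically: Lemma~\ref{lem:jordan_block_complex} supplies the Jordan decomposition of $M=\begin{bmatrix}A & B\\ -B & A\end{bmatrix}$ with middle factor $\operatorname{diag}(J,\overline{J})$, the primary matrix function is evaluated blockwise on this Jordan form, and the hypothesis $\overline{f(J)}=f(\overline{J})$ is invoked at the same spot. The endings diverge. Because $f(J)$ is generally not itself in Jordan form, the paper introduces an auxiliary Jordan decomposition $f(J)=WJ_fW^{-1}$, absorbs $\operatorname{diag}(W,\overline{W})$ into the outer factors to get $\begin{bmatrix}V_f & \overline{V_f}\\ iV_f & \overline{iV_f}\end{bmatrix}\operatorname{diag}(J_f,\overline{J_f})\begin{bmatrix}V_f & \overline{V_f}\\ iV_f & \overline{iV_f}\end{bmatrix}^{-1}$ with $V_f=VW$, and then applies Lemma~\ref{lem:jordan_block_complex} a \emph{second} time, in reverse, to recognize this as the block form built from $f(X)=V_fJ_fV_f^{-1}$. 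You bypass the auxiliary decomposition entirely: the factorization $W=P\operatorname{diag}(V,\overline{V})$ with the constant matrix $P=\begin{bmatrix}I & I\\ iI & -iI\end{bmatrix}$ reconstitutes $\operatorname{diag}\bigl(f(X),\overline{f(X)}\bigr)$ directly, and your explicit conjugation by $P$ checks out ($P^{-1}=\tfrac12\begin{bmatrix}I & -iI\\ I & iI\end{bmatrix}$ is correct; the diagonal blocks collapse to $\tfrac12[(R+iS)+(R-iS)]=R$ and the off-diagonal blocks to $\pm S$), so the conclusion follows by pure block bookkeeping. What each approach buys: the paper's double use of the lemma is computation-free but indirect, leaning on the extra Jordan decomposition of $f(J)$; yours is more elementary and self-contained, needing only that $f$ of a direct sum of Jordan matrices is computed blockwise. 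Your closing observation on well-definedness is also sound and slightly more explicit than the paper: the spectrum of $M$ is $\operatorname{spec}(X)\cup\overline{\operatorname{spec}(X)}$ with matching Jordan block sizes, and the existence of $f(\overline{J})$ is already presupposed by the stated hypothesis.
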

\begin{proof}  
  Let $WJ_fW^{-1}$ be a Jordan decomposition of $f(J)$. Then we have
  that $f(X) = Vf(J)V^{-1} = V_fJ_fV_f^{-1}$ with $V_f = VW$.
  Furthermore, we have that $f(\overline{J}) = \overline{f(J)} =
  \overline{W}\overline{J_f}\overline{W}^{-1}$ and we get the desired
  result as follows:  
  \begin{align}
    f
    \begin{pmatrix}
      A & B \\
      -B & A
    \end{pmatrix}
    &= 
    \begin{bmatrix}
      V & \overline{V} \\
      iV & \overline{iV}
    \end{bmatrix}
    \begin{bmatrix}
      f(J) & 0 \\
      0 & f(\overline{J})
    \end{bmatrix}
    \begin{bmatrix}
      V & \overline{V} \\
      iV & \overline{iV}
    \end{bmatrix}^{-1}
    \label{eq:real_f_first}
    \\
    &=
    \begin{bmatrix}
      V & \overline{V} \\
      iV & \overline{iV}
    \end{bmatrix}
    \begin{bmatrix}
      W & 0 \\
      0 & \overline{W}
    \end{bmatrix}    
    \begin{bmatrix}
      J_f & 0 \\
      0 & \overline{J_f}
    \end{bmatrix}
    \begin{bmatrix}
      W & 0 \\
      0 & \overline{W}
    \end{bmatrix}^{-1}    
    \begin{bmatrix}
      V & \overline{V} \\
      iV & \overline{iV}
    \end{bmatrix}^{-1}
    \\
    &=
    \begin{bmatrix}
      V_f & \overline{V_f} \\
      iV_f & \overline{iV_f}
    \end{bmatrix}
    \begin{bmatrix}
      J_f & 0 \\
      0 & \overline{J_f}
    \end{bmatrix}
    \begin{bmatrix}
      V_f & \overline{V_f} \\
      iV_f & \overline{iV_f}
    \end{bmatrix}^{-1}
    \\
    &=
    \begin{bmatrix}
      \mathrm{Re}[f(X)]  & \mathrm{Im}[f(X)] \\
      -\mathrm{Im}[f(X)] & \mathrm{Re}[f(X)]
    \end{bmatrix}    \label{eq:real_f_last}
  \end{align}
   Lemma~\ref{lem:jordan_block_complex} was used in
   \eqref{eq:real_f_first} and \eqref{eq:real_f_last}.
\end{proof}
See \cite{mackey_2005, Higham_functions_of_matrices_2008} for useful
conditions under which $f(\overline{J}) = \overline{f(J)}$ holds.
In particular, this property characterizes whether $f(A)$ is
real-valued for real $A$, which is assumed hereinafter.
We will also assume that $f$ is analytic.
By Theorem~\ref{thm:matfun_complex} and the definition of the
Fr{\'e}chet derivative, we have that
\begin{align}\label{eq:frechet_complex_block_form}
  L_f\left(
    \begin{bmatrix}
      \phantom{-}A \ B \\
      -B \ A
    \end{bmatrix},    
    \begin{bmatrix}
      \phantom{-}E \ F \\
      -F \ E
    \end{bmatrix}
    \right)
    =
    \begin{bmatrix}
      \mathrm{Re}[L_f(A+iB,E+iF)]  & \mathrm{Im}[L_f(A+iB,E+iF)] \\
      -\mathrm{Im}[L_f(A+iB,E+iF)] & \mathrm{Re}[L_f(A+iB,E+iF)]
    \end{bmatrix}
\end{align}
Since the two matrices on the left hand side are both real, we can use
the complex step approximation by Al-Mohy and
Higham~\cite{Al-MohyHigham_derivative_2009} and get that
\begin{align}\label{eq:error_in_complex_step_for_complex_matrices}
  L_f\left(
    \begin{bmatrix}
      A & B \\
      -B & A
    \end{bmatrix},    
    \begin{bmatrix}
      E & F \\
      -F & E
    \end{bmatrix}
    \right)
    & = \frac{1}{h}\textrm{Im} f
    \begin{pmatrix}
      A+ihE & B+ihF \\
      -(B+ihF) & A+ihE
    \end{pmatrix}    
    \\ & + \frac{h^2}{6} L_f^{(3)}\left(
    \begin{bmatrix}
      A & B \\
      -B & A
    \end{bmatrix},    
    \begin{bmatrix}
      E & F \\
      -F & E
    \end{bmatrix}
    \right)
    + O(h^4) \nonumber
\end{align}
In combination with \eqref{eq:frechet_complex_block_form} this gives
us a complex step approximation for derivatives of functions with
complex matrix input, in general $L_f(A+iB,E+iF) = X_{11}+iX_{12} + O(h^2)$, with $X=\frac{1}{h}\textrm{Im} f
\begin{pmatrix}
  A+ihE & B+ihF \\
  -(B+ihF) & A+ihE
\end{pmatrix}$.
However, this approximation is not quite on the form we are looking for.
In particular, note that the real and imaginary parts of the input
matrices enter the matrix in different blocks.
We use Theorem~\ref{thm:matfun_complex}, permute, and use the theorem again:
\begin{multline}
  \begin{bmatrix}
    \mathrm{Re} f
    \begin{pmatrix}
      A+ihE & B+ihF \\
      -(B+ihF) & A+ihE
    \end{pmatrix}        
    &
    \mathrm{Im} f
    \begin{pmatrix}
      A+ihE & B+ihF \\
      -(B+ihF) & A+ihE
    \end{pmatrix}    
    \\
    -\mathrm{Im} f
    \begin{pmatrix}
      A+ihE & B+ihF \\
      -(B+ihF) & A+ihE
    \end{pmatrix}    
    &
    \mathrm{Re} f
    \begin{pmatrix}
      A+ihE & B+ihF \\
      -(B+ihF) & A+ihE
    \end{pmatrix}    
  \end{bmatrix} 
  \\ =
  f
  \begin{pmatrix}
    A   & B   & hE  & hF \\
    -B  & A   & -hF & hE \\
    -hE & -hF & A   & B \\
    hF  & -hE & -B  & A
  \end{pmatrix} 
   =
  Pf
  \begin{pmatrix}
    A   & hE  & B   & hF \\
    -hE & A   & -hF & B \\
    -B  & -hF & A   & hE \\
    hF  & -B  & -hE & A
  \end{pmatrix}P  
  \\ =
  P
  \begin{bmatrix}
    \mathrm{Re} f
    \begin{pmatrix}
      A+iB & h(E+iF) \\
      -h(E+iF) & A+iB
    \end{pmatrix}        
    &
    \mathrm{Im} f
    \begin{pmatrix}
      A+iB & h(E+iF) \\
      -h(E+iF) & A+iB
    \end{pmatrix}    
    \\
    -\mathrm{Im} f
    \begin{pmatrix}
      A+iB & h(E+iF) \\
      -h(E+iF) & A+iB
    \end{pmatrix}    
    &
    \mathrm{Re} f
    \begin{pmatrix}
      A+iB & h(E+iF) \\
      -h(E+iF) & A+iB
    \end{pmatrix}    
  \end{bmatrix} 
  P
\end{multline}
where
\begin{align}
  P =
  \begin{bmatrix}
    I & 0 & 0 & 0 \\
    0 & 0 & I & 0 \\
    0 & I & 0 & 0 \\
    0 & 0 & 0 & I     
  \end{bmatrix}      
\end{align}
The real and imaginary parts of the sought matrix are in the $1,2$ and $1,4$ blocks of the
permuted matrix, respectively, and we therefore get the desired result
\begin{align}\label{eq:complex_step_for_complex_matrices}
  L_f(A+iB,E+iF) = \frac{1}{h}\left[f
    \begin{pmatrix}
      A+iB & h(E+iF) \\
      -h(E+iF) & A+iB    
    \end{pmatrix}
    \right]_{1,2}
  + O(h^2)
\end{align}
which can be seen as a complex step approximation for complex matrices.
We will below use \eqref{eq:complex_step_for_complex_matrices} to
approximate $\mathrm{Im}[L_f(A+ihB,E+ihF)]$ with real
$A,B,E,F$. In this case
we gain one order in the leading error term of
\eqref{eq:error_in_complex_step_for_complex_matrices}, because
\begin{align}\label{eq:error_term_special_case}
  \left[L_f^{(3)}\left(
    \begin{bmatrix}
      A & hB \\
      -hB & A 
    \end{bmatrix},
    \begin{bmatrix}
      E & hF \\
      hF & E
    \end{bmatrix}  
    \right)\right]_{1,2} = O(h).
\end{align}
This can be shown by employing Theorem~\ref{thm:exist_frechet} and applying $f(X_3)$ with
\begin{align}
  X_0 & =
  \begin{bmatrix}
    A & hB \\
    -hB & A 
  \end{bmatrix},\
  E_0 = \begin{bmatrix}
    E & hF \\
    hF & E 
  \end{bmatrix},
  \\ X_i & =
  \begin{bmatrix}
    X_{i-1} & E_{i-1} \\
    0      & X_{i-1}
  \end{bmatrix},\
  E_i =\begin{bmatrix}
  E_{i-1} & 0 \\
  0      & E_{i-1}
  \end{bmatrix},\ i = 1,2,3.
\end{align}
The term \eqref{eq:error_term_special_case} is given by the upper
right $[f(X_3)]_{1,16}$ block. A Taylor series representation of
$f(X_3)$, see \cite[Theorem
  4.7]{Higham_functions_of_matrices_2008}, gives a power series representation of $[f(X_3)]_{1,16}$ in
$h$ that can be used to show that $[f(X_3)]_{1,16} = O(h)$.

The approach above can be used also for higher derivatives. We will
derive the corresponding expression for the second order Fr{\'e}chet
derivative.  By Theorem~\ref{thm:matfun_complex} and the definition of
the second order Fr{\'e}chet derivative, we have that
\begin{multline}\label{eq:second_frechet_complex_block_form}
  L_f^{(2)}\left(
  \begin{bmatrix}
    \phantom{-}A \ B \\
    -B \ A
  \end{bmatrix},    
  \begin{bmatrix}
    \phantom{-}E \ F \\
    -F \ E
  \end{bmatrix},    
  \begin{bmatrix}
    \phantom{-}G \ H \\
    -H \ G
  \end{bmatrix}
  \right)
  \\
  =
  \begin{bmatrix}
    \mathrm{Re}[L_f^{(2)}(A+iB,E+iF,G+iH)]  & \mathrm{Im}[L_f^{(2)}(A+iB,E+iF,G+iH)] \\
    -\mathrm{Im}[L_f^{(2)}(A+iB,E+iF,G+iH)] & \mathrm{Re}[L_f^{(2)}(A+iB,E+iF,G+iH)]
  \end{bmatrix}
\end{multline}

Since the three matrices on the left hand side are all real, we can
use the complex step approximation for higher order by Al-Mohy and
Arslan~\cite{complex_step_higher_frechet_2021} and get that
\begin{align}
  L_f^{(2)} & \left(
  \begin{bmatrix}
    \phantom{-}A \ B \\
    -B \ A
  \end{bmatrix},    
  \begin{bmatrix}
    \phantom{-}E \ F \\
    -F \ E
  \end{bmatrix},    
  \begin{bmatrix}
    \phantom{-}G \ H \\
    -H \ G
  \end{bmatrix}
  \right)
  \\
  & =
  \frac{1}{h}\mathrm{Im} [L_f\left(
  \begin{bmatrix}
    A+ihG & B+ihH \\
    -B-ihH & A+ihG 
  \end{bmatrix},
  \begin{bmatrix}
    E & F \\
    -F & E
  \end{bmatrix}  
  \right)] + O(h^2)
  \\
  & =
  \frac{1}{h}\mathrm{Im} [
    \frac{1}{h}
    \left[f(X)\right]_{1:2,3:4} + O(h^3)    
  ] + O(h^2)
  \\
  & =  
  \frac{1}{h^2}\mathrm{Im}\left[f(X)\right]_{1:2,3:4} + O(h^2),
\end{align}
where
\begin{align}
  X = 
  \begin{bmatrix}
    A+ihG &  B+ihH &  hE    & hF    \\
    -B-ihH &  A+ihG & -hF    & hE    \\
    -hE    & -hF    &  A+ihG & B+ihH \\
    hF    & -hE    & -B-ihH & A+ihG
  \end{bmatrix},
\end{align}
and where we used~\eqref{eq:complex_step_for_complex_matrices} in the
second equality making use of~\eqref{eq:error_term_special_case} to
gain one order in the leading error term.
In combination
with~\eqref{eq:second_frechet_complex_block_form} this gives us a
complex step approximation for second order Fr{\'e}chet derivatives
with complex matrix input, i.e.\
$L_f^{(2)}(A+iB,E+iF,G+iH) = \frac{1}{h^2}(\mathrm{Im}\left[f(X)\right]_{13} + i\mathrm{Im}\left[f(X)\right]_{14}) + O(h^2)$.
Again, this approximation is not quite on the desired form. We use the
same approach as before, use Theorem~\ref{thm:matfun_complex},
permute, and use the theorem again:
\begin{align}
  \begin{bmatrix}
    \mathrm{Re} f(X)
    &
    \mathrm{Im} f(X)
    \\
    -\mathrm{Im} f(X)
    &
    \mathrm{Re} f(X)
  \end{bmatrix}
\end{align}
\begin{align}
  =&
  f
  \begin{pmatrix}
    A   & B   & hE  & hF  & hG  & hH  & 0   & 0  \\
    -B  & A   & -hF & hE  & -hH & hG  & 0   & 0  \\
    -hE & -hF & A   & B   & 0   & 0   & hG  & hH \\
    hF  & -hE & -B  & A   & 0   & 0   & -hH & hG \\
    -hG & -hH & 0   & 0   & A   & B   & hE  & hF \\
    hH  & -hG & 0   & 0   & -B  & A   & -hF & hE \\
    0   & 0   & -hG & -hH & -hE & -hF & A   & B  \\
    0   & 0   & hH  & -hG & hF  & -hE & -B  & A  
  \end{pmatrix}
  \\
  =&
  P
  f
  \begin{pmatrix}
    A   & hE  & hG  & 0   & B   & hF  & hH  & 0  \\
    -hE & A   & 0   & hG  & -hF & B   & 0   & hH \\
    -hG & 0   & A   & hE  & -hH & 0   & B   & hF \\
    0   & -hG & -hE & A   & 0   & -hH & -hF & B  \\
    -B  & -hF & -hH & 0   & A   & hE  & hG  & 0  \\
    hF  & -B  & 0   & -hH & -hE & A   & 0   & hG \\
    hH  & 0   & -B  & -hF & -hG & 0   & A   & hE \\
    0   & hH  & hF  & -B  & 0   & -hG & -hE & A  
  \end{pmatrix}
  P^T
  \\
  =&
  P
  \begin{bmatrix}
    \mathrm{Re} f(Y)  & \mathrm{Im} f(Y) \\
    -\mathrm{Im} f(Y) & \mathrm{Re} f(Y)
  \end{bmatrix}
  P^T,
\end{align}
where
\begin{align}
P = 
\begin{bmatrix}
  I & 0 & 0 & 0 & 0 & 0 & 0 & 0 \\
  0 & 0 & 0 & 0 & I & 0 & 0 & 0 \\
  0 & I & 0 & 0 & 0 & 0 & 0 & 0 \\
  0 & 0 & 0 & 0 & 0 & I & 0 & 0 \\
  0 & 0 & I & 0 & 0 & 0 & 0 & 0 \\
  0 & 0 & 0 & 0 & 0 & 0 & I & 0 \\
  0 & 0 & 0 & I & 0 & 0 & 0 & 0 \\
  0 & 0 & 0 & 0 & 0 & 0 & 0 & I 
\end{bmatrix}
\end{align}
and
\begin{align}\label{eq:X_tmp}
Y = \begin{bmatrix}
  A+iB    & h(E+iF)  & h(G+iH)  & 0       \\
  -h(E+iF) & A+iB    & 0        & h(G+iH) \\
  -h(G+iH) & 0       & A+iB     & h(E+iF) \\
  0       & -h(G+iH) & -h(E+iF) & A+iB    
\end{bmatrix}.
\end{align}
The real and imaginary parts of the sought matrix are in the $1,4$ and
$1,8$ blocks of the permuted matrix, respectively, and we therefore
get the desired result
\begin{align}
  L_f^{(2)}(A+iB, E+iF, G+iH) = \frac{1}{h^2} \left[f(Y)\right]_{1,4} + O(h^2).
\end{align}

Finally, we develop a complex step approximation for a second order
partial derivative on the form $\partial^\alpha
f(\bar{A}(x))|_{x=\bar{x}}$, $|\alpha|=2$. Let $\alpha=\beta+\gamma$
with $|\beta|=|\gamma|=1$. Then, according to Theorem~\ref{thm:derivative_as_linear_combination_of_frechet},
\begin{align}\label{eq:partial_second_order_as_sum_of_frechet}
  \partial^\alpha f(\bar{A}(x))|_{x=\bar{x}} =
  L_f(\bar{A},\bar{A}^{(\alpha)}) +
  L_f^{(2)}(\bar{A},\bar{A}^{(\beta)},\bar{A}^{(\gamma)})
\end{align}
and we can use the complex step approximations developed above for the
first and second order Fr{\'e}chet derivatives. Another alternative is
to compute the first and second order contributions simultaneously.
We decompose $\bar{A}(x)$ and its derivatives at $x=\bar{x}$ in their
real and imaginary parts: $\bar{A} = A+iB$, $\bar{A}^{(\beta)} =
E+iF$, $\bar{A}^{(\gamma)} = G+iH$, $\bar{A}^{(\alpha)} = K+iL$ and let
\begin{align}
  X = 
  \begin{bmatrix}
    A+ihG  &    B+ihH  &  h(E+ihK) & h(F+ihL) \\
  -(B+ihH) &    A+ihG  & -h(F+ihL) & h(E+ihK) \\
 -h(E+ihK) & -h(F+ihL) &    A+ihG  &   B+ihH  \\
  h(F+ihL) & -h(E+ihK) &  -(B+ihH) &   A+ihG
  \end{bmatrix}.
\end{align}
Then,
\begin{align}
  & \frac{1}{h^2}  \textrm{Im} [f(X)]_{1:2,3:4} \\
  = & \frac{1}{h}  \textrm{Im}[L_f\left(
    \begin{bmatrix}
      A+ihG  & B+ihH \\
    -(B+ihH) & A+ihG      
    \end{bmatrix},
    \begin{bmatrix}
      E+ihK  & F+ihL \\
    -(F+ihL) & E+ihK
    \end{bmatrix}    
    \right)] + O(h^2) \nonumber
  \\
  = & \frac{1}{h}  \textrm{Im}[L_f\left(
    \begin{bmatrix}
      A+ihG  & B+ihH \\
    -(B+ihH) & A+ihG      
    \end{bmatrix},
    \begin{bmatrix}
      E & F \\
     -F & E
    \end{bmatrix}    
    \right)] \nonumber
  \\
  & + \textrm{Re}[L_f\left(
    \begin{bmatrix}
      A+ihG  & B+ihH \\
    -(B+ihH) & A+ihG      
    \end{bmatrix},
    \begin{bmatrix}
      K & L \\
     -L & K
    \end{bmatrix}    
    \right)] + O(h^2) \nonumber
  \\
  = &
  L_f^{(2)} \left(
  \begin{bmatrix}
    \phantom{-}A \ B \\
    -B \ A
  \end{bmatrix},    
  \begin{bmatrix}
    \phantom{-}E \ F \\
    -F \ E
  \end{bmatrix},    
  \begin{bmatrix}
    \phantom{-}G \ H \\
    -H \ G
  \end{bmatrix}
  \right) 
   + L_f\left(
  \begin{bmatrix}
    \phantom{-}A \ B \\
    -B \ A
  \end{bmatrix},
  \begin{bmatrix}
    \phantom{-}K \ L \\
    -L \ K
  \end{bmatrix}    
  \right)] + O(h^2) \nonumber
    \\
    = &   
    \begin{bmatrix}
      \textrm{Re}[\partial^\alpha f(\bar{A}(x))|_{x=\bar{x}}] & \textrm{Im}[\partial^\alpha f(\bar{A}(x))|_{x=\bar{x}}] \\
      -\textrm{Im}[\partial^\alpha f(\bar{A}(x))|_{x=\bar{x}}] & \textrm{Re}[\partial^\alpha f(\bar{A}(x))|_{x=\bar{x}}]
    \end{bmatrix} + O(h^2). \nonumber
\end{align}
Here, we first used \eqref{eq:complex_step_for_complex_matrices} again
making use of~\eqref{eq:error_term_special_case} to see that this is a
second order approximation. The second equality is due to the
linearity of the Fr{\'e}chet derivative and the third step is due
to~\cite[Theorem~3.1]{complex_step_higher_frechet_2021}. The last step
is due to \eqref{eq:frechet_complex_block_form},
\eqref{eq:second_frechet_complex_block_form}, and
\eqref{eq:partial_second_order_as_sum_of_frechet}. Thus, we have that  
\begin{equation}
  \partial^\alpha f(\bar{A}(x))|_{x=\bar{x}} = \frac{1}{h^2} (\textrm{Im}[f(X)]_{13} + i \textrm{Im}[f(X)]_{14}) + O(h^2).
\end{equation}
Using the permutation approach again, we arrive at the desired result: 
\begin{align}
\partial^\alpha f(\bar{A}(x))|_{x=\bar{x}} = \frac{1}{h^2}
\left[f
\begin{pmatrix}
  \bar{A} & h\bar{A}^{(\beta)} & h\bar{A}^{(\gamma)} & h^2\bar{A}^{(\alpha)} \\
  -h\bar{A}^{(\beta)} & \bar{A} & -h^2\bar{A}^{(\alpha)} & h\bar{A}^{(\gamma)} \\
  -h\bar{A}^{(\gamma)} & -h^2\bar{A}^{(\alpha)} & \bar{A} & h\bar{A}^{(\beta)} \\
  h^2\bar{A}^{(\alpha)} & -h\bar{A}^{(\gamma)} & -h\bar{A}^{(\beta)} & \bar{A}
\end{pmatrix}
\right]_{14}
+ O(h^2).
\end{align}

\bibliographystyle{siamplain}
\bibliography{biblio}

\end{document}